\newcommand{\Exp}{\mathrm{Exp}}
\newcommand{\R}{\mathbb{R}}
\newcommand{\vol}{\mathrm{vol}}
\newcommand{\tr}{\mathrm{tr}}
\newcommand{\ad}{\mathrm{ad}}
\newcommand{\ud}{\mathrm{d}}
\newcommand{\id}{\,\mathrm{d}}
\newcommand{\B}{\mathcal{B}}
\newtheorem{thm}{Theorem}
\newtheorem{lem}{Lemma}
\newtheorem{cor}{Corollary}
\newtheorem{prop}{Proposition}
\theoremstyle{definition}
\newtheorem*{rem}{Remark}
\newtheorem*{defin}{Definition}
\begin{document}
\title{Harmonic Manifolds and the Volume of Tubes about Curves}
\author{Bal\'azs Csik\'os}
\address{E\"otv\"os Lor\'and University, Institute of Mathematics}
\curraddr{Budapest, P\'azm\'any P\'eter stny. 1/C, H-1117 Hungary}
\email{csikos@cs.elte.hu}
\author{M\'arton Horv\'ath}
\address{Budapest University of Technology and Economics, Institute of Mathematics}
\curraddr{Budapest, Egry J\'ozsef \'ut 1., H-1111 Hungary}
\email{horvathm@math.bme.hu} 
\subjclass[2010]{53C25 (primary) and 53B20 (secondary)} 
\keywords{Volume of tubes, harmonic manifolds, D'Atri spaces, Damek--Ricci spaces, 
2-stein spaces, symmetric spaces}
\date{}
\begin{abstract} H.~Hotelling proved that in the $n$-dimensional Euclidean or 
spherical space, the volume of a tube of small radius about a curve depends only 
on the length of the curve and the radius. A. Gray and L. Vanhecke extended 
Hotelling's theorem to rank one symmetric spaces computing the volumes of the 
tubes explicitly in these spaces. In the present paper, we generalize these 
results by showing that every harmonic manifold has the above tube property. We 
compute the volume of tubes in the Damek--Ricci spaces. We show that if a 
Riemannian manifold has the tube property, then it is a 2-stein D'Atri space. We 
also prove that a symmetric space has the tube property if and only if it is 
harmonic. Our results answer some questions posed by L.~Vanhecke, 
T.~J.~Willmore, and G.~Thorbergsson.
\end{abstract}

\maketitle

\section{Introduction}\label{sec:introduction}
In 1939 H. Hotelling \cite{Hotelling} showed that in the $n$-dimensional 
Euclidean or spherical space, the volume of a tube of small radius about a curve 
depends only on the length of the curve and the radius. Hotelling's result was 
generalized in different directions.  H. Weyl \cite{Weyl} proved that the 
volume of a tube of small radius about a submanifold of a Euclidean or spherical 
space depends only on intrinsic invariants of the submanifold and the radius.  
A.~Gray and L.~Vanhecke \cite{Gray-Vanhecke} extended Hotelling's theorem to 
rank one symmetric spaces. 

In the present paper, we are interested in the widest class of connected  Riemannian 
manifolds to which Hotelling's theorem can be extended. The problem of finding this class was raised during private discussions of the first author with G.~Thorbergsson at the University of Cologne in 2013.
The question is motivated by a new characterization of harmonic manifolds 
given by the authors \cite{Csikos_Horvath}. Harmonic manifolds were 
introduced by E.~T.~Copson and H.~S.~Ruse \cite{Copson_Ruse} as Riemannian 
manifolds having a non-constant radially symmetric harmonic function in the 
neighborhood of any point. They proved that this condition holds if and only if 
small geodesic spheres have constant mean curvature. By the results of the 
paper \cite{Csikos_Horvath}, connected harmonic manifolds are characterized also
by the 
property that the volume of the intersection of two geodesic balls of small 
equal radius depends only on the radius and the distance between the centers of 
the balls.  As we shall see in Section \ref{harmonic_spaces}, this 
characterization implies that every connected harmonic manifold has Hotelling's 
tube 
property. It seems to be a reasonable conjecture that the converse is also true, 
that is, Hotelling's tube property is true only in harmonic manifolds. A weaker 
form of this conjecture, saying that a symmetric space has the tube property if 
and only if it is harmonic, was proposed by G.~Thorbergsson.

The structure and the main results of the paper are the following. In Section 
\ref{sec:intro}, we compute a formula for the volume of tubes about a curve in a 
general Riemannian manifold. This formula is not new, an equivalent formula 
appears also in \cite{Vanhecke-Willmore}. The formula gives the volume as an 
integral with respect to the arc length parameter of the curve, where the 
integrand is the sum of two terms, one of which depends only on the velocity 
vector of the curve, while the other depends on both the velocity and the 
acceleration vectors. L.~Vanhecke and T.~J.~Willmore \cite{Vanhecke-Willmore} 
proved that the second term vanishes in D'Atri spaces and conjectured that if 
the second term vanishes for all curves, then the space must be a D'Atri space. 
Recall that a Riemannian manifold is called a D'Atri space if its local 
geodesic symmetries are volume preserving.
At the end of Section \ref{sec:intro}, we prove this conjecture, and as a 
consequence, we obtain that  a Riemannian manifold has the tube property if and 
only if it is a D'Atri space and satisfies the tube property for tubes about 
geodesic curves (Theorem \ref{DAtri}).

The main result of Section \ref{harmonic_spaces} is Theorem \ref{thm:harmonic}, 
claiming that every connected harmonic manifold has the tube property.

There are two known classes of harmonic manifolds: two-point homogeneous spaces 
and Damek--Ricci spaces. These examples exhaust all \emph{homogeneous} 
harmonic manifolds according to J.~Heber \cite{Heber}. Two-point homogeneous 
spaces are the Euclidean and the 
rank one symmetric spaces. For these spaces, the volume of tubes 
about curves were computed explicitly in \cite {Gray-Vanhecke}. We complete the 
picture by computing the volume of tubes about curves in the Damek--Ricci spaces 
in Section \ref{sec:Damek_Ricci}.

In Sections \ref{sec:2-stein} and \ref{sec:symmetric}, we prove some facts 
supporting the conjecture that the tube property can hold only in harmonic 
manifolds. Harmonic manifolds are real analytic Riemannian manifolds, and they can be characterized among real analytic Riemannian manifolds by a sequence $\{L_k\mid k\geq 2\}$ 
of curvature conditions, known as Ledger conditions, see \cite{Kowalski} and 
\cite[section 6.8]{Willmore}. It would be enough to show that the tube property 
implies even Ledger conditions $\{L_{2k}\mid k\geq 1\}$ for two independent 
reasons. First, L. Vanhecke \cite{Vanhecke} proved that the odd Ledger 
conditions follow from the even ones. The second reason is that D'Atri spaces 
satisfy all Ledger conditions of odd order. Conditions $L_2$ and $L_4$ are 
equivalent to the requirement that the manifold is $2$-stein. The  
main result of Section \ref{sec:2-stein} is Theorem \ref{thm:2-stein}, claiming 
that every connected Riemannian manifold having the tube property is $2$-stein. 
In particular, such a manifold is Einstein.

In Section \ref{sec:symmetric}, first we adapt the formula for the volume of 
tubes about curves for the special case of geodesic curves in a symmetric 
space. Slightly different, but equivalent forms of some of our formulae were obtained by X.~Gual-Arnau and A.~M.~Naveira \cite{Gual_Naveira_1}, \cite{Gual_Naveira_2}. In the second part of the section, we verify the above conjecture within the class of symmetric 
spaces (Theorem \ref{thm:sym}). 2-stein symmetric spaces were classified by P. Carpenter, A. Gray, and 
T. J. Willmore \cite{Carpenter}.  According to the classification, besides the 
harmonic symmetric spaces, the family of 2-stein symmetric spaces contains $23$ 
dual pairs of irreducible symmetric spaces. Though it would be possible to show 
the failure of the tube property for each non-harmonic example in the list one 
by one, we shall present a shorter argument that rules out all of them 
together. The theorem extends obviously to locally symmetric spaces.

Throughout the paper, every manifold is assumed to be connected and of class $\mathcal C^{\infty}$. By the Kazdan--DeTurck theorem \cite{Kazdan_DeTurck}, the geodesic normal coordinate systems on an Einstein space provide a real analytic atlas, with respect to which the Riemannian metric is real analytic. In fact, the same is true for Riemannian manifolds satisfying the third Ledger condition, in particular for D'Atri spaces, see \cite{Szabo2}. Consequently, Riemannian manifolds having the tube property have a natural real analytic structure. This implies, for example, that the volume of a tube about a geodesic in such a space is a real analytic function of the radius.

\section{Volume of tubes and D'Atri spaces}\label{sec:intro}

Let $(M,g)$ be a Riemannian manifold. Denote by $\Exp\colon TM\to M$ its
exponential map, and by $\Exp_p \colon T_p M\to M$ the restriction of $\Exp$ to 
the tangent space at $p\in M$. Let $\nabla$ be the Levi-Civita connection of 
$M$. For a vector field $X$ along a curve $\gamma$, we shall use the notation 
$X'$ for the covariant derivative $\nabla_{\gamma'}X$.

\begin{defin}
For a smooth injective regular curve $\gamma\colon[a,b]\to M$ and $r>0$, set 
\[
T(\gamma,r)=\{\mathbf v\in TM\mid \exists t\in[a,b]\text{ such that }\mathbf 
v\in T_{\gamma(t)}M, \mathbf v\perp\gamma'(t),\text{ and }\|\mathbf v\|\leq 
r\}.\]
Assume that $r$ is small enough to guarantee that the 
exponential map is defined and injective on $T(\gamma,r)$. Then we define the 
tube of radius $r$ about $\gamma$ by
\[\mathcal T(\gamma,r)=\Exp(T(\gamma,r)).\]
\end{defin}
\begin{defin}
We say that a Riemannian manifold has the tube property if there is a function
$V\colon [0,\infty)\to \mathbb R$ such that 
\[\vol(\mathcal T(\gamma,r))=V(r) l_{\gamma}\]
for any smooth injective regular curve $\gamma$ of length  $l_{\gamma}$  and any sufficiently small $r$. 
\end{defin}

Consider a smooth injective unit speed curve $\gamma\colon[0,l]\to M$ and an 
orthonormal frame $E_1,\dots,E_n$ along $\gamma$ such that $E_n(t)=\gamma'(t)$.
Denote by $B_r^{n-1}$ the closed ball of radius $r$ about the origin in 
$\R^{n-1}$ and by $S_r^{n-2}$ its boundary sphere.
Parameterize the tube $\mathcal T(\gamma,r)$ by the map $\mathbf r\colon B_r^{n-1}\times[0,l]\to M$ defined by the formula
\begin{equation}\label{parameterization}
\mathbf r(x_1,x_2,\dots,x_n)=\Exp_{\gamma(x_n)}(x_1E_1(x_n)+\dots+ 
x_{n-1}E_{n-1}(x_n)).
\end{equation}

The volume of the tube is
\begin{equation}\label{tube_volume_1}
\vol(\mathcal T(\gamma,r))=
\int_{B_r^{n-1}\times [0,l]}\|\partial_1\mathbf 
r\wedge\dots\wedge\partial_n\mathbf r\|(\mathbf x) \id \mathbf x.
\end{equation}
To calculate the partial derivatives of the map $\mathbf r$ at a point $\mathbf 
x=(x_1,\dots,x_n)\in B_r^{n-1}\times[0,l]$, consider the Jacobi fields 
$J_1^{\mathbf x},\dots,J_n^{\mathbf x}$ along the geodesic $\eta^{\mathbf 
x}\colon [0,1]\to M$, $\eta^{\mathbf x}(s)= \mathbf 
r(sx_1,\dots,sx_{n-1},x_n)$, such that $J_i^{\mathbf x}(0)=\mathbf 0$ and 
${J_i^{\mathbf x}}'(0)=E_i(x_n)$. As these Jacobi fields give the differential 
of the exponential map, we have $\partial_i\mathbf  r(\mathbf x)=J_i^{\mathbf 
x}(1)$ for $i=1,\dots,n-1$.

For the $n$th partial derivative, consider the geodesic variation 
$\Gamma(t,s)=\mathbf r(sx_1,\dots,sx_{n-1},x_n+t)$. The vector field 
${J}^{\mathbf x}(s)=\partial_1\Gamma(0,s)$ is a Jacobi field along the geodesic 
$\eta^{\mathbf x}$, such that ${J}^{\mathbf x}(0)=\gamma'(x_n)$ and 
${J}^{\mathbf x}{}'(0)=x_1E_1'(x_n)+\dots +x_{n-1}E_{n-1}'(x_{n})$. This Jacobi 
field gives the partial derivative of the map $\mathbf r$ with respect to its 
$n$th variable by $\partial_n\mathbf r(x_1,\dots,x_{n})={J}^{\mathbf x}(1)$.

Decompose the Jacobi field ${J}^{\mathbf x}$ into the sum of the Jacobi fields 
${\hat J}^{\mathbf x},{\check J}{}^{\mathbf x}$ given by the initial conditions  
${\hat J}^{\mathbf x}(0)=\gamma'(x_n),{\hat J}^{\mathbf x}{}'(0)=\mathbf 0$ and 
${\check J}^{\mathbf x}(0)=\mathbf 0,{\check J}^{\mathbf x}{}'(0)= 
x_1E_1'(x_n)+\dots +x_{n-1}E_{n-1}'(x_{n})$. 
We can decompose the Jacobi field ${\check J}^{\mathbf x}$ as
\begin{equation*}
{\check J}^{\mathbf x}=\sum_{i=1}^n \Bigg(\sum_{j=1}^{n-1}x_jg(E_{j}'(x_n),E_i(x_n))\Bigg)J_i^{\mathbf x}.
\end{equation*}

Write $\mathbf x$ in the form $\mathbf x=(\rho y_1,\dots,\rho y_{n-1},y_n)$, where $\|(y_1,\dots,y_{n-1})\|=1$, $\rho\geq 0$,  and set $\mathbf y=(y_1,\dots,y_n)$. Then we have 
\begin{align*}\eta^{\mathbf x}(s)&=\eta^{\mathbf y}(\rho s), \qquad\rho J^{\mathbf x}_i(s)=J^{\mathbf y}_i(\rho s)\text{ for }1\leq i\leq n,\\ J^{\mathbf x}(s)&= J^{\mathbf y}(\rho s),\qquad \hat J^{\mathbf x} (s)=\hat J^{\mathbf y}(\rho s),\qquad\check J^{\mathbf x}(s)=\check J^{\mathbf y}(\rho s).
\end{align*}

The volume density function appearing in \eqref{tube_volume_1} can be expressed as 
\begin{equation*}
\begin{split}
\|\partial_1\mathbf r\wedge\dots\wedge\partial_n\mathbf r\|(\mathbf 
x)&=\|J_1^{\mathbf x}\wedge\dots\wedge J_{n-1}^{\mathbf x}\wedge {J}^{\mathbf 
x}\|(1)\\
{}&=\|J_1^{\mathbf x}\wedge\dots\wedge J_{n-1}^{\mathbf x}\wedge {\hat 
J}^{\mathbf x}+J_1^{\mathbf x}\wedge\dots\wedge J_{n-1}^{\mathbf x}\wedge 
{\check J}^{\mathbf x}\|(1)\\
{}&=\begin{cases}\rho^{1-n}\|J_1^{\mathbf y}\wedge\dots\wedge J_{n-1}^{\mathbf 
y}\wedge {\hat J}^{\mathbf y}+J_1^{\mathbf y}\wedge\dots\wedge J_{n-1}^{\mathbf 
y}\wedge {\check J}^{\mathbf y}\|(\rho)&\text{if }\rho>0,\\1&\text{if 
}\rho=0.\end{cases}
\end{split}\end{equation*}
Extend $E_i(x_n)$ to a parallel vector field $\mathbf e_i^{\mathbf y}$ along $\eta^{\mathbf y}$.  If $\rho$ is small, then
\[
(J_1^{\mathbf y}\wedge\dots\wedge J_{n-1}^{\mathbf y}\wedge {\hat J}^{\mathbf y})(\rho)=\rho^{n-1}(\mathbf e_1^{\mathbf y}\wedge\dots\wedge \mathbf e_n^{\mathbf y})(\rho)+O(\rho^{n+1}),
\]
and 
\[
J_1^{\mathbf y}\wedge\dots\wedge J_{n-1}^{\mathbf y}\wedge {\check J}^{\mathbf y}=\Bigg(\sum_{j=1}^{n-1}y_jg(E_{j}'(x_n),E_n(x_n))\Bigg)J_1^{\mathbf y}\wedge\dots\wedge J_{n}^{\mathbf y},
\]
where
\[
(J_1^{\mathbf y}\wedge\dots\wedge J_{n}^{\mathbf y})(\rho)=\rho^{n}(\mathbf e_1^{\mathbf y}\wedge\dots\wedge \mathbf e_n^{\mathbf y})(\rho)+O(\rho^{n+2}),
\]
thus
\[
\rho^{n-1}\|\partial_1\mathbf r\wedge\dots\wedge\partial_n\mathbf r\|(\mathbf x)=\|J_1^{\mathbf y}\wedge\dots\wedge J_{n-1}^{\mathbf y}\wedge {\hat J}^{\mathbf y}\|(\rho)+\Bigg(\sum_{j=1}^{n-1}y_jg(E_{j}'(x_n),E_n(x_n))\Bigg)\|J_1^{\mathbf y}\wedge\dots\wedge J_{n}^{\mathbf y}\|(\rho).
\]
The orthogonality of the vector fields $E_i$ and $E_n=\gamma'$ gives the equation $g(E_i',\gamma')+g(E_i,\gamma'')=0$, hence 
\[
\sum_{j=1}^{n-1}y_jg(E_{j}'(x_n),E_n(x_n))=-g\Bigg(\sum_{j=1}^{n-1}y_jE_{j}(x_n),\gamma''(x_n)\Bigg).
\]

The volume density function $\omega\colon U\to \mathbb R$ of the exponential map of $M$ is defined on the domain $U\subset TM$ of the exponential map $\Exp$ by the following condition: if $p\in M$, then the pull-back of the Riemannian volume measure by $\Exp_p$ is $\omega|_{T_pM\cap U}$ times the Lebesgue measure on $T_pM$. It is known that 
\[
\omega(x_1E_1(x_n)+\dots+x_{n-1}E_{n-1}(x_n))=\|J_1^{\mathbf x}\wedge\dots\wedge J_{n}^{\mathbf x}\|(1)=\rho^{-n}\|J_1^{\mathbf y}\wedge\dots\wedge J_{n}^{\mathbf y}\|(\rho).
\]
Denote the left hand side of this equation by $\overline{\omega}(\mathbf x)$.

Let $\tilde J^{\mathbf x}$  be the Jacobi field along $\eta^{\mathbf x}$ defined by the conditions $\tilde J^{\mathbf x}(0)=E_n(x_n)$ and $\tilde J^{\mathbf x}(1)=\mathbf 0$. $\hat J^{\mathbf x}$ can be decomposed as 
\[
\hat J^{\mathbf x}=\tilde J^{\mathbf x}-\sum_{i=1}^n g(\tilde J^{\mathbf 
x}{}'(0),E_i(x_n))J_i^{\mathbf x}.
\]
Using this decomposition, we obtain
\[
\big(J_1^{\mathbf x}\wedge\dots\wedge J_{n-1}^{\mathbf x}\wedge \hat J^{\mathbf 
x}\big)(1)=-g(\tilde J^{\mathbf x}{}'(0),E_n(x_n))\big(J_1^{\mathbf 
x}\wedge\dots\wedge J_{n}^{\mathbf x} \big)(1).
\]
If $\rho$ is small, then $g(\tilde J^{\mathbf x}{}'(0),E_n(x_n))=-1/\rho +O(1)<0$, thus the volume of the tube is
\begin{equation}\label{tube_volume}
\vol(\mathcal T(\gamma,r))=
\int_0^l\int_{ B_r^{n-1}} \left(-g(\tilde J^{\mathbf x}{}'(0),\gamma'(x_n))-g\Bigg(\sum_{i=1}^{n-1}x_iE_i(x_n),\gamma''(x_n)\Bigg)\right)\overline{\omega}(\mathbf x)\id \mathbf x.
\end{equation}
Differentiating the function $t\mapsto \vol(\mathcal T (\gamma|_{[0,t]},r))$ at 
$t=x_n$, we obtain that in a manifold having the tube property with function 
$V$, equation
\begin{equation}\label{tube}
V(r)=\int_{ B_r^{n-1}} \left(-g(\tilde J^{\mathbf x}{}'(0),\gamma'(x_n))-g\Bigg(\sum_{i=1}^{n-1}x_iE_i(x_n),\gamma''(x_n)\Bigg)\right)\overline{\omega}(\mathbf x)\id x_1\dots\id x_{n-1}
\end{equation}
holds for any unit speed curve $\gamma\colon [0,l]\to M$ and any $x_n\in [0,l]$.

Let $\mathbf u\in T_pM$ be an arbitrary unit tangent vector at a point $p\in M$. Define $B_r^{n-1}(\mathbf u)$ to be the $(n-1)$-ball
\[B_r^{n-1}(\mathbf u)=\{\mathbf w\in T_pM \mid g(\mathbf u,\mathbf w)=0\text{ and } \|\mathbf w\|\leq r\}.\]

For $\mathbf w\in B_r^{n-1}(\mathbf u)$, let $\tilde J_{\mathbf u}^{\mathbf w}$ denote the Jacobi field along the geodesic curve $t\mapsto \Exp(t\mathbf w)$ defined by $\tilde J_{\mathbf u}^{\mathbf w}(0)=\mathbf u$ and $\tilde J_{\mathbf u}^{\mathbf w}(1)=\mathbf 0$. $\tilde J_{\mathbf u}^{\mathbf w}$ is uniquely defined as $r$ is small.

Since for any choice of $\mathbf u,\mathbf v\in T_pM$ satisfying $\|\mathbf 
u\|=1$ and $g(\mathbf u,\mathbf v)=0$, we can find a unit speed curve 
$\gamma\colon[0,l]\to M$ and a parameter $x_n\in (0,l)$ such that 
$\gamma(x_n)=p$, $\gamma'(x_n)=\mathbf u$, and $\gamma''(x_n)=\mathbf v$,  
\eqref{tube} gives
\begin{equation}\label{tube2}
V(r)=-\int_{ B_r^{n-1}(\mathbf u)} g(\tilde J^{\mathbf w}_{\mathbf u}{}'(0),\mathbf u){\omega}(\mathbf w)\id \mathbf w- \int_{ B_r^{n-1}(\mathbf u)}g(\mathbf w,\mathbf v){\omega}(\mathbf w)\id \mathbf w.
\end{equation}

If the manifold $M$ and the radius $r$ are fixed, then the first integral of 
the right hand side of (\ref{tube2}) depends only on the vector $\mathbf u$, 
while the second only on $\mathbf u$ and $\mathbf v$. Substituting $\mathbf 
v=\mathbf 0$ into \eqref{tube2}, we obtain 
\begin{equation}\label{tube3}
V(r)=-\int_{ B_r^{n-1}(\mathbf u)} g(\tilde J^{\mathbf w}_{\mathbf u}{}'(0),\mathbf u){\omega}(\mathbf w)\id \mathbf w,
\end{equation}
which implies 
\begin{equation}\label{tube4}
0=\int_{ B_r^{n-1}(\mathbf u)}g(\mathbf w,\mathbf v){\omega}(\mathbf w)\id \mathbf w
\end{equation}
for any allowed choice of $\mathbf u$ and $\mathbf v$.
Equation \eqref{tube2} characterizing spaces with the tube property is 
equivalent to the pair of equations \eqref{tube3} and  \eqref{tube4}. Finding 
the geometrical meaning of the latter equations leads us to the following 
theorem.  
\begin{thm}\label{DAtri}
A Riemannian manifold has the tube property if and only if it is a D'Atri space 
and satisfies the tube property for geodesic curves.
\end{thm}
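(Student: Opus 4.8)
The plan is to build on the reduction already carried out before the theorem: by the discussion leading to \eqref{tube2}, a manifold has the tube property if and only if the characterizing equation \eqref{tube2} holds for every unit vector $\mathbf u$ and every $\mathbf v\perp\mathbf u$, and this is in turn equivalent to the simultaneous validity of \eqref{tube3} and \eqref{tube4}. I would therefore isolate two claims: first, that \eqref{tube3} (for all unit $\mathbf u$) expresses exactly the tube property restricted to geodesic curves; and second, that \eqref{tube4} (for all admissible $\mathbf u,\mathbf v$) is equivalent to the D'Atri condition. Granting these, Theorem \ref{DAtri} follows at once.

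The first claim is essentially a rereading of the derivation of \eqref{tube}. For a geodesic $\gamma$ one has $\gamma''\equiv\mathbf 0$, so the acceleration term in \eqref{tube} drops out and the density reduces to the integrand of \eqref{tube3} with $\mathbf u=\gamma'(x_n)$. Since every unit tangent vector at every point is the velocity of some geodesic, the assertion that this integral is a function of $r$ alone, independent of $\mathbf u$ and of the base point, is, after integrating in $x_n$, precisely the statement that $\vol(\mathcal T(\gamma,r))=V(r)\,l_\gamma$ for geodesics.

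For the second claim I would first recall the standard fact that a Riemannian manifold is a D'Atri space if and only if its volume density is even in normal coordinates, that is $\omega(\mathbf w)=\omega(-\mathbf w)$, because the local geodesic symmetry at $p$ is $\mathbf w\mapsto-\mathbf w$ in these coordinates and has unit Jacobian. The implication ``D'Atri $\Rightarrow$ \eqref{tube4}'' is then immediate: the substitution $\mathbf w\mapsto-\mathbf w$ maps $B_r^{n-1}(\mathbf u)$ onto itself, and evenness of $\omega$ together with oddness of $\mathbf w\mapsto g(\mathbf w,\mathbf v)$ forces the integral in \eqref{tube4} to equal its own negative.

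The converse, ``\eqref{tube4} $\Rightarrow$ D'Atri'', is the heart of the matter, and I expect it to be the main obstacle. Reading \eqref{tube4} as the vanishing, for every unit $\mathbf u$ and every small $r$, of the hyperplane integral $\int_{B_r^{n-1}(\mathbf u)}\mathbf w\,\omega(\mathbf w)\id\mathbf w$, I would differentiate in $r$ to pass to the equatorial sphere $S^{n-2}(\mathbf u)=\{\boldsymbol\xi\in S^{n-1}\mid\boldsymbol\xi\perp\mathbf u\}$ and, after rescaling $\mathbf w=r\boldsymbol\xi$, rewrite the component along a fixed direction $\mathbf e$ as the Minkowski--Funk transform $\mathcal R[\,g(\mathbf e,\cdot)\,\omega(r\,\cdot)\,](\mathbf u)$, where $\mathcal R f(\mathbf u)=\int_{S^{n-2}(\mathbf u)}f\id\boldsymbol\xi$. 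Splitting $\omega=\omega_++\omega_-$ into even and odd parts, the contribution of $\omega_+$ has an odd integrand and is annihilated by $\mathcal R$, whereas $g(\mathbf e,\cdot)\,\omega_-(r\,\cdot)$ is \emph{even}. The crucial input is the classical injectivity of the Minkowski--Funk transform on even functions: from $\mathcal R[\,g(\mathbf e,\cdot)\,\omega_-(r\,\cdot)\,]\equiv0$ one obtains $g(\mathbf e,\boldsymbol\xi)\,\omega_-(r\boldsymbol\xi)=0$ for all $\boldsymbol\xi$, and letting $\mathbf e$ and $r$ vary gives $\omega_-\equiv0$, so $\omega$ is even and $M$ is D'Atri. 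The delicate points are locating the precise classical injectivity statement and justifying the differentiation in $r$ and the rescaling that reduce \eqref{tube4} to this Funk-transform identity.
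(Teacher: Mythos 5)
Your proposal is correct and follows essentially the same route as the paper: the same reduction of the tube property to the pair \eqref{tube3} and \eqref{tube4}, the same parity argument showing D'Atri spaces satisfy \eqref{tube4}, and the same key step for the converse, namely differentiating in $r$, passing to the equatorial spheres, and invoking the classical characterization of the kernel of the Funk transform (which the paper cites as Theorem 1.7 in \cite{Radon}) to conclude that $\omega$ is even. Your splitting $\omega=\omega_++\omega_-$ and appeal to injectivity on even functions is merely an equivalent rephrasing of the paper's statement that the kernel consists exactly of the odd functions.
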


\begin{proof} As $\gamma''\equiv 0$ for geodesic curves, equation \eqref{tube3} 
holds in a Riemannian manifold if and only if the manifold has the tube 
property for geodesics. 

As the density function $\omega$ is even for D'Atri spaces, and 
the linear function $\ell_{\mathbf v}\colon T_pM\to \mathbb R$, $\ell_{\mathbf 
v}(\mathbf w)= g(\mathbf w,\mathbf v)$ is odd, \eqref{tube4} holds in every 
D'Atri space. 

It remains to show that \eqref{tube4} implies that the space is 
D'Atri. If we have $\int_{B_r^{n-1}(\mathbf u)} \ell_{\mathbf 
v}(\mathbf w){\omega}(\mathbf w)\id \mathbf w=0$ for all linear functions 
$\ell_{\mathbf v}$, where $\mathbf u\perp\mathbf v$, then the equation holds 
also without the orthogonality assumption. 
Differentiation with respect to $r$ gives that  
\begin{equation}\label{Funk} 
 \int_{S_r^{n-2}(\mathbf u)} \ell_{\mathbf v}(\mathbf w){\omega}(\mathbf w)\id \mathbf w=0,
\end{equation} 
where $S_r^{n-2}(\mathbf u)$ is the boundary sphere of $B_r^{n-1}(\mathbf u)$, 
and $\id \mathbf w$ stands for integration with respect to the hypersurface 
measure of the sphere $S_r^{n-2}(\mathbf u)$. Denote by $B_r^n(p)\subset T_pM$ 
the ball of radius $r$ about the origin of $T_pM$, and by $S_r^{n-1}(p)$ its 
boundary sphere.
Equation \eqref{Funk} means that the Funk transform of the restriction of the function $\ell_{\mathbf v}\omega$ onto $S_r^{n-1}(p)$ is $0$. It is known that a smooth function on a sphere is in the kernel of the Funk transform if and only if it is odd (see Theorem 1.7 in \cite[p. 93]{Radon}). This implies  that $\ell_{\mathbf v}\omega$ is an odd function on the sphere ${S_r^{n-1}(p)}$. As this is true for any small $r$, we conclude that $\ell_{\mathbf v}\omega$ is an odd function on the ball ${B_r^{n-1}(p)}$. As $\ell_{\mathbf v}$ is an arbitrary linear function, we have that $\omega$ is an even function on the ball ${B_r^{n-1}(p)}$. This means that the manifold is a D'Atri space.\end{proof}

\begin{rem} L. Vanhecke and T. J. Willmore conjectured in \cite[p. 38]{Vanhecke-Willmore} that if equation 
\begin{equation}\label{D'Atri}
\vol(\mathcal T(\gamma,r))=
\int_0^l\int_{ B_r^{n-1}} -g(\tilde J^{\mathbf x}{}'(0),\gamma'(x_n))\overline{\omega}(\mathbf x)\id \mathbf x
\end{equation} 
holds for an arbitrary unit speed curve $\gamma$, then the space is D'Atri. 
Comparing \eqref{D'Atri} to \eqref{tube_volume}, we see that  \eqref{D'Atri} 
implies 
\begin{equation*}
0=
\int_0^l\int_{ B_r^{n-1}} -g\Bigg(\sum_{i=1}^{n-1}x_iE_i(x_n),\gamma''(x_n)\Bigg)\overline{\omega}(\mathbf x)\id \mathbf x.
\end{equation*}
Differentiating with respect to $l$ at $l=x_n$ and  choosing $\gamma$ so that $\gamma(x_n)=p$, $\gamma'(x_n)=\mathbf u$ and $\gamma''(x_n)=\mathbf v$, we obtain that \eqref{tube4} holds in the space. As we have seen, this implies that the space is D'Atri.
\end{rem}

The general formula for the volume of a tube about a curve can be 
simplified in the case of a geodesic curve. First of all, as $\gamma'$ is 
parallel for a geodesic, we may choose the orthonormal frame $E_1,\dots,E_n$ to 
be parallel. Then the Jacobi fields $\check J^{\mathbf x}$ are equal to zero. 
Writing $\mathbf x$ in the form $\mathbf x=(\rho\mathbf u,t)$, where $\mathbf 
u\in S^{n-2}_1$ is a unit vector, $\rho\geq 0$ and setting  $\mathbf 
y=(\mathbf u,t)$, we obtain
\begin{equation}\label{tube_volume2}
\vol(\mathcal T(\gamma,r))=
\int_{ B_r^{n-1}\times [0,l]} \|J^{\mathbf x}_1\wedge\dots\wedge J^{\mathbf 
x}_{n-1}\wedge \hat J^{\mathbf 
x}\|(1)\id \mathbf x=\int_0^l\int_0^r\int_{ S_1^{n-2}} 
\frac{1}{\rho}\|J^{\mathbf y}_1\wedge\dots\wedge J^{\mathbf 
y}_{n-1}\wedge \hat J^{\mathbf 
y}\|(\rho)\id \mathbf u \id \rho\id t.
\end{equation}

\section{Tube property in harmonic manifolds}\label{harmonic_spaces}

The main goal of this section is to prove the following theorem.
\begin{thm}\label{thm:harmonic}
Every connected harmonic manifold has the tube property.
\end{thm}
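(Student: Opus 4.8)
The plan is to invoke Theorem \ref{DAtri} and reduce the claim to two facts: that a connected harmonic manifold is a D'Atri space, and that it satisfies the tube property for geodesic curves. The first is immediate, since in a harmonic manifold the volume density $\omega$ of the exponential map at any point is a radial function, hence even, which is precisely the D'Atri condition identified in the proof of Theorem \ref{DAtri}. For the second fact I would use the characterization of harmonic manifolds from \cite{Csikos_Horvath}: for small $r$ the volume $v(r):=\vol(B(p,r))$ of a metric ball is independent of its centre, and the volume $h(d,r):=\vol(B(p,r)\cap B(q,r))$ of the intersection of two balls of equal radius $r$ depends only on $r$ and on $d=d(p,q)$.

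Fix a unit-speed geodesic $\gamma\colon[0,l]\to M$, with $r$ and $l$ so small that $\gamma([0,l])$ lies in a totally convex ball, and let $\mathcal S=\bigcup_{t\in[0,l]}B(\gamma(t),r)$ be the closed $r$-neighbourhood of $\gamma([0,l])$. I would approximate $\mathcal S$ from inside by the finite unions $\bigcup_{k=0}^{N}B_k$, where $B_k=B(\gamma(kl/N),r)$. The decisive observation is that the centres $\gamma(kl/N)$ are collinear, so that $t\mapsto d(x,\gamma(t))$ is convex on the relevant range; consequently, for any subcollection $S\subseteq\{0,\dots,N\}$ one has $\bigcap_{m\in S}B_m=B_{\min S}\cap B_{\max S}$. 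Substituting this into the inclusion--exclusion formula makes all multiple intersections collapse to two-ball (indeed, two-extreme-ball) terms, and a short sign computation shows that only the consecutive pairs survive, giving the exact identity
\[
\vol\Big(\bigcup_{k=0}^{N}B_k\Big)=(N+1)\,v(r)-N\,h(l/N,r).
\]

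Letting $N\to\infty$, the left-hand side increases to $\vol(\mathcal S)$, while the right-hand side equals $v(r)+N\big(v(r)-h(l/N,r)\big)$; since the limit exists and is finite, the one-sided derivative $c(r):=\lim_{d\to0^+}\big(v(r)-h(d,r)\big)/d$ exists and $\vol(\mathcal S)=v(r)+c(r)\,l$, with $c(r)$ depending only on $r$ by the characterization above. Finally, the nearest-point projection onto $\gamma([0,l])$ decomposes $\mathcal S$ as the disjoint union of the tube $\mathcal T(\gamma,r)$ and the two end half-balls; since $\omega$ is even each half-ball has volume $\tfrac12 v(r)$, whence $\vol(\mathcal T(\gamma,r))=\vol(\mathcal S)-v(r)=c(r)\,l$. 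This is the tube property for geodesics with the universal function $V=c$, and Theorem \ref{DAtri} then upgrades it to the full tube property. I expect the main obstacle to be the collapse of the inclusion--exclusion sum --- that is, proving the convexity lemma $\bigcap_{m\in S}B_m=B_{\min S}\cap B_{\max S}$ and verifying that exactly the consecutive pairs remain --- together with the smallness conditions on $r$ and $l$ needed to guarantee convexity, the clean cap decomposition, and the existence of the limit; the genuinely new input is the ball-intersection characterization of \cite{Csikos_Horvath}.
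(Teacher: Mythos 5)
Your proposal is correct in substance, and it reaches the tube property for geodesics by a genuinely different mechanism than the paper. The shared skeleton is the reduction via Theorem \ref{DAtri} (harmonic $\Rightarrow$ $\omega$ radial $\Rightarrow$ even $\Rightarrow$ D'Atri), the decomposition of the $r$-neighbourhood of a geodesic segment into the tube plus two end half-balls (the paper's Lemma \ref{D_t}), and the constancy of half-ball volumes in D'Atri spaces (Proposition \ref{half-ball}). The core differs: the paper makes an \emph{infinitesimal} comparison, bounding $|\vol(N_r(t))-\vol(E_r(t))|$ by $Ct^2/2$, where $E_r(t)$ is the union of just the two end balls (Lemma \ref{E_t} plus a mean-value estimate drawn from the Jacobi-field integral \eqref{tube_volume}), and then differentiates at $t=0$, using Szab\'o's result that $\vol(E_r(t))$ is a universal function of $(r,t)$ to identify the derivative with the integral in \eqref{tube3}. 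You instead prove an \emph{exact finite identity}: for $r$ below the convexity radius, small metric balls are convex, so for equally spaced centres along the geodesic every multiple intersection collapses, $\bigcap_{m\in S}B_m=B_{\min S}\cap B_{\max S}$, and your inclusion--exclusion bookkeeping is right (subsets with $\max S-\min S\geq 2$ cancel in pairs, only consecutive pairs survive), giving $\vol\big(\bigcup_{k=0}^{N}B_k\big)=(N+1)v(r)-N\,h(l/N,r)$. Letting $N\to\infty$ then yields linearity in $l$ directly, with harmonicity entering through the two-ball intersection characterization of \cite{Csikos_Horvath} (equivalent to the union statement the paper cites from \cite{Szabo}). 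Your route is more elementary and self-contained --- it never invokes the Jacobi-field formulas \eqref{tube_volume}--\eqref{tube3} --- while the paper's route is shorter given those formulas, which it needs anyway for Theorem \ref{DAtri}.

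One step of yours needs patching. From the existence of $\lim_{N\to\infty}N\big(v(r)-h(l/N,r)\big)$ for a \emph{fixed} $l$ you cannot immediately conclude that the one-sided derivative $c(r)=\lim_{d\to 0^+}\big(v(r)-h(d,r)\big)/d$ exists, nor that the constant you extract is independent of $l$; since independence of $l$ is exactly what linearity requires, there is a circularity risk as written. The fix uses an ingredient you already have: ball convexity also shows $d\mapsto h(d,r)$ is non-increasing, so writing $\phi(d)=v(r)-h(d,r)\geq 0$ and sandwiching, for $l/(N+1)\leq d\leq l/N$, the quotient $\phi(d)/d$ between $\tfrac{N}{N+1}\,\phi(l/(N+1))/(l/(N+1))$ and $\tfrac{N+1}{N}\,\phi(l/N)/(l/N)$, both bounds converge to $\big(\vol(\mathcal S)-v(r)\big)/l$. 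Hence the genuine one-sided derivative exists, equals $\big(\vol(\mathcal S)-v(r)\big)/l$ for every admissible $l$, and depends on nothing but $r$ (because $h$ depends only on $(d,r)$). With that repair, your argument is complete.
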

\begin{proof}
Let $(M,g)$ denote a connected harmonic manifold.  As a harmonic manifold is a D'Atri space, it is enough to prove the tube property for geodesic curves of $M$ by Theorem \ref{DAtri}. This reduces to showing that the integral on the right hand side of \eqref{tube3} does not depend on the unit tangent vector $\mathbf u$.

When the exponential map of $M$ is defined on the closed Euclidean ball 
$B^n_r(p)\subset T_pM$, then $\mathcal B_r(p)=\Exp(B^n_r(p))$ is the geodesic 
ball of radius $r$ centered at $p$.   A geodesic half-ball is the exponential 
image of a Euclidean half-ball in a tangent space centered at the origin.
\begin{defin}
For a tangent vector $\mathbf v\in T_pM\setminus\{\mathbf 0\}$ and a radius $r$ less than the injectivity radius of $M$ at $p$, we define the half-ball $\mathcal H_r(\mathbf v)$ by the formula
\[\mathcal H_r(\mathbf v)=\Exp(\{\mathbf w\in T_pM\mid g(\mathbf v,\mathbf 
w)\geq0\text{ and }\|\mathbf w\|\leq r\}).\]
\end{defin}

\begin{prop}\label{half-ball}
In a D'Atri space, the volume of a small geodesic half-ball depends only on the radius.
\end{prop}
\begin{proof}
The volume preserving geodesic reflection in $p$ maps the half-ball $\mathcal 
H_r(\mathbf v)$  onto its complementary half-ball $\mathcal H_r(-\mathbf v)$. 
Consequently, a geodesic half-ball has the same volume as its complementary 
half-ball. We also know that in a D'Atri space, the volume of a small geodesic 
ball depends only on the radius of the ball \cite{DAtri}. Hence the volume of a 
half-ball also depends only on the radius.
\end{proof}

Fix an arbitrary unit tangent vector $\mathbf u\in T_pM$ and consider the unit speed geodesic curve $\gamma$ in $M$, starting at $\gamma(0)=p$ with initial velocity $\gamma'(0)=\mathbf u$. Let  
\[N_r(t)=\{q\in M\mid \exists \tau\in[0,t],\, d(q,\gamma(\tau))\leq r\}\]
be the $r$-neighborhood of $\gamma([0,t])$,
where $t>0$ is a small number,  $d$ is the intrinsic metric of $M$ induced by $g$. 

\begin{lem}\label{D_t}
$N_r(t)$ can be decomposed into the non-overlapping union of two geodesic half-balls and a tube as follows
\[N_r(t)=\mathcal H_r(-\gamma'(0))\cup \mathcal T(\left.\gamma\right|_{[0,t]},r)\cup \mathcal H_r(\gamma'(t)). \]
\end{lem}
\begin{proof}
The inclusion $\supseteq$ is trivial.
If $q\in N_r(t)$, then let $\gamma(\tau)$, $\tau\in [0,t]$ be the closest point 
of $\gamma([0,t])$ to $q$, and let $\eta$ be the shortest geodesic connecting 
$\gamma(\tau)$ to $q$. If $\tau =0$, then by the formula for the first variation 
of arclength, $\eta$ has to enclose with $\gamma$ an obtuse or right angle, 
therefore $q\in \mathcal H_r(-\gamma'(0))$. Similarly, $\tau=t$ gives $q\in 
\mathcal H_r(\gamma'(t))$. Finally, if $0<\tau<t$, then the variation formula 
implies that $\eta$ is orthogonal to $\gamma$, consequently $q\in\mathcal 
T(\left.\gamma\right|_{[0,t]},r)$. 
\end{proof}

Lemma \ref{D_t} gives that $\vol(N_r(t))=\vol(\mathcal 
H_r(-\gamma'(0)))+\vol(\mathcal T(\left.\gamma\right|_{[0,t]},r)+\vol(\mathcal 
H_r(\gamma'(t)))$. This equality, equation \eqref{tube_volume} and Proposition 
\ref{half-ball} yield that 
\begin{equation}\label{eq1}
-\int_{ B_r^{n-1}(\mathbf u)} g(\tilde J^{\mathbf w}_{\mathbf u}{}'(0),\mathbf u){\omega}(\mathbf w)\id \mathbf w=\left.\frac{\ud}{\ud t}\vol(\mathcal T(\left.\gamma\right|_{[0,t]},r))\right|_{t=0}=\left.\frac{\ud}{\ud t}\vol(N_r(t))\right|_{t=0}.
\end{equation}

Consider the domain $E_r(t)=\mathcal B_r(\gamma(0))\cup\mathcal B_r(\gamma(t))$. 
\begin{lem}\label{E_t}
We have  
\[
N_r(t)\supseteq E_r(t)\supseteq \mathcal H_r(-\gamma'(0))\cup \mathcal T(\left.\gamma\right|_{[0,t]},r-t/2)\cup \mathcal H_r(\gamma'(t)).
\]
\end{lem}
\begin{proof}
The first inclusion is a corollary of the definition of $N_r(t)$. To show the second one, choose a point $q$ in $\mathcal H_r(-\gamma'(0))\cup \mathcal T(\left.\gamma\right|_{[0,t]},r-t/2)\cup \mathcal H_r(\gamma'(t))$. It is clear that if $q$ is in one of the half-balls $\mathcal H_r(-\gamma'(0))$ or $\mathcal H_r(\gamma'(t))$, then $q\in E_r(t)$. If  $q\in \mathcal T(\left.\gamma\right|_{[0,t]},r-t/2)$, then there is a $\tau\in [0,t]$ such that $d(q,\gamma(\tau))\leq r-t/2$. If $\tau\leq t/2$, then the triangle inequality gives
\[d(q,\gamma(0))\leq d(q, \gamma(\tau))+d(\gamma(\tau),\gamma(0))\leq (r-t/2)+t/2=r,\] thus $q\in \mathcal B_r(\gamma(0))\subseteq  E_r(t)$. Similarly, if $\tau\geq t/2$, then $q\in \mathcal B_r(\gamma(t))\subseteq  E_r(t)$.
\end{proof}
The above two lemmata give that 
\begin{align*}
|\vol(N_r(t))-\vol(E_r(t))|&\leq \vol ( \mathcal T(\left.\gamma\right|_{[0,t]},r))-\vol ( \mathcal T(\left.\gamma\right|_{[0,t]},r-t/2))\\&=-\int_0^t\int_{r-t/2}^r\int_{ S_{\rho}^{n-2}(\gamma'(\tau))} g(\tilde J^{\mathbf w}_{\gamma'(\tau)}{}'(0),\gamma'(\tau)){\omega}(\mathbf w)\id \mathbf w\id \rho \id \tau.
\end{align*}
By the mean value theorem for integration, there is a point $(\bar\tau,\bar\rho)$ in the rectangle $[0,t]\times[r-t/2,r]$ such that 
\begin{align*}
\int_0^t\int_{r-t/2}^r\int_{ S_{\rho}^{n-2}(\gamma'(\tau))} g(\tilde J^{\mathbf w}_{\gamma'(\tau)}{}'(0),\gamma'(\tau)){\omega}(\mathbf w)\id \mathbf w\id \rho \id \tau =\frac{t^2}2 \int_{ S_{\bar \rho}^{n-2}(\gamma'(\bar\tau))} g(\tilde J^{\mathbf w}_{\gamma'(\bar\tau)}{}'(0),\gamma'(\bar\tau)){\omega}(\mathbf w)\id \mathbf w.
\end{align*}
Choose $0<T<r/2$ such that $\gamma$ is defined on $[0,T]$ and denote by $C$ the maximum of  
\[\int_{ S_{\rho}^{n-2}(\gamma'(\tau))} g(\tilde J^{\mathbf w}_{\gamma'(\tau)}{}'(0),\gamma'(\tau)){\omega}(\mathbf w)\id \mathbf w\]
as $(\tau,\rho)$ is running over the rectangle $[0,T]\times [r/2,r]$. Then for any $0<t<T$, we have
\[
|\vol(N_r(t))-\vol(E_r(t))|\leq \frac{C}{2} t^2.
\]
From this, we get
\begin{equation}\label{eq2}
\left.\frac{\ud}{\ud t}\vol(N_r(t))\right|_{t=0}=\left.\frac{\ud}{\ud t}\vol(E_r(t))\right|_{t=0}.
\end{equation}
In a harmonic manifold, the volume of the union of two small geodesic balls 
depends only on the radius and the distance between their centers \cite{Szabo}. 
Hence we have that $\vol(E_r(t))$ depends only on $r$ and $t$, but not on the 
fixed geodesic $\gamma$, so we can define the function $V$ 
by $V(r)=\left.\frac{\ud}{\ud t}\vol(E_r(t))\right|_{t=0}$. Equations 
(\ref{eq1}) and (\ref{eq2}) show that a harmonic manifold has the tube 
property with the function $V$. 
\end{proof}

\section{The volume of a tube in a Damek--Ricci space} \label{sec:Damek_Ricci}

There are two known families of harmonic manifolds, the two-point homogeneous 
spaces and the Damek--Ricci spaces. 
A. Gray and L. Vanhecke computed the volume of tubes in two-point homogeneous  
spaces \cite{Gray-Vanhecke}. 
In this section, we compute the volume of tubes in Damek--Ricci spaces.

Let $\mathfrak n=\mathfrak v\oplus\mathfrak z$ be a generalized Heisenberg algebra ($\dim\mathfrak v=p, \dim\mathfrak z=q$). Recall that $\mathfrak n$ is a two-step nilpotent Lie algebra with center $\mathfrak z=[\mathfrak{n},\mathfrak{n}]$, endowed with an inner product $\langle \, ,\rangle$ and a  map $J\colon \mathfrak z\to \mathrm{End}(\mathfrak v)$, $Z\mapsto J_Z$  such that 
\begin{align*}
\forall V\in \mathfrak v,\,Z\in \mathfrak{z}&: \langle V,Z\rangle =0,\\
\forall V_1,V_2\in \mathfrak  v,\,Z\in \mathfrak z&: \langle J_ZV_1,V_2\rangle=\langle[V_1,V_2],Z\rangle,\\
\forall V\in \mathfrak v,\,Z\in \mathfrak{z}&: J_Z^2(V)=-\langle Z, Z\rangle  V.
\end{align*}

Let $\mathfrak a$ be a one-dimensional real vector space with a basis element 
$A$. Consider the direct sum $\mathfrak s=\mathfrak n\oplus \mathfrak a$ of the 
linear spaces $\mathfrak n$ and $\mathfrak a$. We can  write an element of 
$\mathfrak s$ as $V+Z+tA$, where $V\in\mathfrak v$, $Z\in\mathfrak z$, 
$t\in\mathbb R$. Extend  the inner product $\langle\, ,\rangle$ and the Lie 
bracket $[\, ,]$ of $\mathfrak{n}$ onto $\mathfrak s$ by
\begin{align*}
\langle V_1+Z_1+t_1A,V_2+Z_2+t_2A\rangle&=\langle V_1+Z_1,V_2+Z_2\rangle+t_1t_2,\\
[V_1+Z_1+t_1A,V_2+Z_2+t_2A]&=[V_1,V_2]+\frac12t_1V_2-\frac12t_2V_1+t_1Z_2-t_2Z_1.
\end{align*}

The simply connected Lie group attached to the Lie algebra $\mathfrak s$, equipped with the induced left-invariant metric, is called a Damek--Ricci space $S$.
One can show that $S$ is a semi-direct product $N\rtimes\mathbb R$, where $N$ is the generalized Heisenberg group attached to $\mathfrak n$. Hence we can write a point of $S$ in the form $(\exp_\mathfrak n (V+Z),t)$, where $\exp_\mathfrak n$ denotes the Lie exponential map of $N$. Every element of $S$ will be given in this form below. For example, the unit element $\mathbbm{1}$ of $S$ is $(\exp_{\mathfrak n}(0),0)$. The multiplication rule of $S$ is
\[(\exp_\mathfrak n (V_1+Z_1),t_1)\cdot(\exp_\mathfrak n (V_2+Z_2),t_2)=(\exp_\mathfrak n (V_1+e^\frac{t_1}2V_2+Z_1+e^{t_1}Z_2+\frac12e^\frac {t_1}2[V_1,V_2]),t_1+t_2).\]
The Riemannian metric is
\[g_{(\exp_\mathfrak n (V+Z),t)}(V_1+Z_1+t_1A,V_2+Z_2+t_2A)=e^{-t}\langle V_1,V_2\rangle+e^{-2t}\langle Z_1-\frac12[V,V_1],Z_2-\frac12[V,V_2]\rangle+t_1t_2.\]
The pull-back of the volume measure of $S$ onto $\mathfrak v\oplus \mathfrak z  \oplus \mathfrak a$ is $\varrho(V+Z+tA)\id V\id Z\id t$, where $\varrho (V+Z+tA)=e^{-(\frac p2+q) t}$ is the volume density function, $\id V\id Z\id t$ is the Lebesgue measure on $\mathfrak v\oplus \mathfrak z  \oplus \mathfrak a$.

For more details on Damek--Ricci spaces, see \cite{DamekRicci} or \cite{DamekRicci2}.

As every Damek--Ricci space is harmonic, to compute the volume of a tube of small radius about an arbitrary curve, it is enough to compute that volume for a single geodesic curve by Theorem \ref{thm:harmonic}. We will do the computation for the geodesic curve $\gamma\colon \mathbb R\to S$, $\gamma(t)=\Exp(tA)=(\exp_\mathfrak n(0),t)$.   The left translation $\Phi_t$ by $\gamma(t)$ is an isometry of $S$ for all $t\in\mathbb R$. The geodesic $\gamma$ is an orbit of the one parameter group $\Phi_*=\{\Phi_t:t\in\mathbb R\}$ of these isometries. 
Introduce the following notions.
\begin{align*}
B_{r}^{n-1}&=\{W\in T_{\mathbbm 1}S:\langle W,A\rangle=0,\, \|W\|\leq r\},\\
\B_{r}^{n-1}&=\mathrm{Exp}(B_{r}^{n-1}).
\end{align*}

Let $a$ be small enough to assure that the last coordinate of every point of $\Phi_a(\B_r^{n-1})$ with respect to the semidirect product decomposition $N\rtimes\mathbb R$ is negative. We can choose $b$ in a similar way to get that the last coordinate of every point of $\Phi_b(\B_r^{n-1})$ is positive. The tube of radius $r$ about the geodesic segment $\gamma|_{[a,b]}$ is the set $T=\bigcup_{a\leq t\leq b}\Phi_t(\B_r^{n-1})$. Consider the intersection $\Sigma$ of hypersurface $(\exp_\mathfrak n(\mathfrak n),0)$ and the tube $T$, and let $\sigma\subset\mathfrak n$ be the subset defined by $\mathrm{exp}_{\mathfrak n}(\sigma)\times \{0\}=\Sigma$. For any $x\in \B_r^{n-1}$, $\Sigma$ intersects the curve  $\{\Phi_t(x): a\leq t\leq b\}$ at exactly one point.  $T$ is split into two pieces $T=T_-\cup T_+$ by $\Sigma$, where the pieces $T_{\pm}=\{(w,t)\in T: \pm t\geq 0\}$  are defined by the sign of the last coordinate. As 
\[\bigcup_{0\leq t\leq b-a}\Phi_t(\Sigma)=T_+\cup\Phi_{b-a}(T_-),\]
and $\Phi_{b-a}$ preserves the volume, we have
\[\vol (T)=\vol\left(\bigcup_{0\leq t\leq b-a}\Phi_t(\Sigma)\right).\]
To compute the volume on the right hand side, define the sets
\[
\sigma_t=\{W\in \mathfrak n \mid (\exp_{\mathfrak n}(W),t)\in \Phi_t(\Sigma)\}=\{e^{\frac{t}2}V+e^tZ \mid V\in \mathfrak v,\,Z\in \mathfrak z,\,V+Z\in \sigma\}.
\]

Then
\begin{align*}
\vol\left(\bigcup_{0\leq t\leq b-a}\Phi_t(\Sigma)\right)
&=\int_0^{b-a}\int_{\sigma_t}\varrho(W+tA)\id W\id t
=\int_0^{b-a}\int_{\sigma_t}e^{-(\frac p2+q) t}\id W\id t.
\end{align*}

Computing the inner integral using the linear substitution $\sigma\to \sigma_t$, $V+Z\mapsto e^{\frac{t}2}V+e^tZ$, we obtain
\[\int_{\sigma_t}e^{-(\frac p2+q) t}\id W=\int_{\sigma}e^{-(\frac p2+q) t}e^{(\frac p2+q) t}\id W =\vol(\sigma).\]

Thus,
\begin{equation}\label{vol(T)}
\vol(T)=(b-a)\vol(\sigma).
\end{equation}

To describe the shape of $\sigma$, find the intersection of $\Sigma$ and the 
$\Phi_*$ orbit of a typical point of $\B_r^{n-1}$. Consider a unit speed 
geodesic $\gamma_{V+Z}$ starting from the unit element 
$\gamma_{V+Z}(0)=\mathbbm{1}$ with initial velocity $\gamma_{V+Z}'(0)=V+Z$, 
perpendicular to $\gamma$, where $\|V+Z\|=1$, $\|Z\|=z$. We have
\[\gamma_{V+Z}(t)=\left(\exp_\mathfrak n \left(\frac{2\theta(t)}{\chi(t)} V+\frac{2\theta^2(t)}{\chi(t)} J_ZV+\frac{2\theta(t)}{\chi(t)} Z\right),\log\left(\frac{1-\theta^2(t)}{\chi(t)}\right)\right),\]
where $\theta(t)=\tanh\left(\frac t2\right)$ and $\chi(t)=1+z^2\theta^2(t)$ (see Section 4.1.11 in \cite{DamekRicci}).

The left translation by $\Phi_t$ moves the boundary point $\gamma_{V+Z}(r)$ of $\B_r^{n-1}$ to $\Sigma$ if and only if $t=-\log\left(\frac{1-\theta^2(r)}{\chi(r)}\right)$. Then $\Phi_t(\gamma_{V+Z}(r))$ is the point $(\exp_{\mathfrak n}(P_{V+Z}),0)$, where
\begin{align*}
P_{V+Z}&= \sqrt{\frac{\chi(r)}{1-\theta^2(r)}}\frac{2\theta(r)}{\chi(r)} V+\sqrt{\frac{\chi(r)}{1-\theta^2(r)}}\frac{2\theta^2(r)}{\chi(r)} J_ZV+\frac{\chi(r)}{1-\theta^2(r)}\frac{2\theta(r)}{\chi(r)} Z\\
&=\frac{2\theta(r)}{\sqrt{(1-\theta^2(r))\chi(r)}} V+ \frac{2\theta^2(r)}{\sqrt{(1-\theta^2(r))\chi(r)}} J_ZV+\frac{2\theta(r)}{1-\theta^2(r)} Z.
\end{align*}
The squared norms of the $\mathfrak v$ and $\mathfrak z$ components of $P_{V+Z}$ are equal to
\begin{align*}
\left\|\frac{2\theta(r)}{\sqrt{(1-\theta^2(r))\chi(r)}} V+ \frac{2\theta^2(r)}{\sqrt{(1-\theta^2(r))\chi(r)}} J_ZV\right\|^2&=\left(\frac{4\theta^2(r)}{(1-\theta^2(r))\chi(r)}+\frac{4\theta^4(r)}{(1-\theta^2(r))\chi(r)}z^2\right)\|V\|^2\\&=\frac{4\theta^2(r)}{1-\theta^2(r)}\|V\|^2=4\sinh^2\left(\frac r2\right)\|V\|^2
\end{align*}
and
\[
\left\|\frac{2\theta(r)}{1-\theta^2(r)} Z\right\|^2=
4\sinh^2\left(\frac r2\right)\cosh^2\left(\frac r2\right)\|Z\|^2.
\]

As $V+Z$ runs over the unit sphere of $\mathfrak n$, $P_{V+Z}$ runs over the ellipsoid in $\mathfrak n$ defined by the equation 
\[\frac{\|V\|^2}{4\sinh^2\left(\frac r2\right)}+\frac{\|Z\|^2}{4\sinh^2\left(\frac r2\right)\cosh^2\left(\frac r2\right)}=1.\] 
This ellipsoid has $p$ semi-principal axes of length $2\sinh\!\big(\frac 
r2\big)$ and $q$ semi-principal axes of length $2\sinh\!\big(\frac r2\big) 
\cosh\!\big(\frac r2\big)$. Since for any fixed $Z$ of length less than or 
equal to $1$, the $\mathfrak v$ component  of $P_{V+Z}$ is obtained from $V$ by 
a linear similarity transformation, the unit sphere of $\mathfrak n$ is mapped 
\emph{onto} this ellipsoid. Therefore $\sigma$ is the body bounded by this 
ellipsoid, and its volume is
\[\vol(\sigma)=\omega_{p+q}2^{p+q}\sinh^{p+q}\left(\frac r2\right)\cosh^{q}\left(\frac r2\right),\]
where $\omega_{m}$ denotes the volume of the $m$-dimensional Euclidean unit ball 
for $m\in \mathbb N$. Substituting this volume into \eqref{vol(T)}, we obtain 
the following 
\begin{thm} The volume of a solid tube of radius $r$ about a curve of length $l$ in a Damek--Ricci space is
\[\omega_{p+q}2^{p+q}\sinh^{p+q}\left(\frac r2\right)\cosh^{q}\left(\frac r2\right)l.\]
The $p+q$-dimensional volume of the tubular surface of radius $r$ about the curve is 
\[\omega_{p+q}2^{p+q-1}\left((p+q)\sinh^{p+q-1}\left(\frac r2\right)\cosh^{q+1}\left(\frac r2\right)+q\sinh^{p+q+1}\left(\frac r2\right)\cosh^{q-1}\left(\frac r2\right)\right)l.\]
\end{thm}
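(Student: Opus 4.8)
The plan is to read off the first formula from the computation already carried out above, and to obtain the second by differentiating it.

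For the solid tube, the preceding computation already gives $\vol(T)=(b-a)\vol(\sigma)$ for the tube $T$ about the geodesic segment $\gamma|_{[a,b]}$, where $\gamma(t)=\Exp(tA)$. Since $\|A\|=1$, this geodesic is unit speed, so the length of $\gamma|_{[a,b]}$ is exactly $b-a$. By Theorem \ref{thm:harmonic} a Damek--Ricci space has the tube property, hence for an arbitrary curve of length $l$ the tube volume is $V(r)\,l$ with
\[V(r)=\vol(\sigma)=\omega_{p+q}2^{p+q}\sinh^{p+q}\!\left(\tfrac r2\right)\cosh^{q}\!\left(\tfrac r2\right),\]
which is the first assertion.

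For the tubular surface I would pass from the enclosed volume to the boundary area by the coarea formula. For $r$ below the focal and injectivity radii, the distance function $u=d(\cdot,\gamma)$ is smooth off $\gamma$, is realized along the geodesics normal to $\gamma$, and satisfies $|\nabla u|=1$; moreover its level sets $\mathcal S_\rho=\{u=\rho\}\cap\mathcal T(\gamma,r)$ inside the tube are precisely the tubular surfaces of radius $\rho$, since a point $\Exp_{\gamma(t)}(\mathbf w)$ of the tube ($\mathbf w\perp\gamma'(t)$) has $u=\|\mathbf w\|$. Writing $n=p+q+1$, the coarea formula therefore gives $\vol(\mathcal T(\gamma,r))=\int_0^r\vol_{n-1}(\mathcal S_\rho)\id\rho$, and differentiating yields
\[\vol_{n-1}(\mathcal S_r)=\frac{\ud}{\ud r}\vol(\mathcal T(\gamma,r))=V'(r)\,l.\]
In particular the surface area is again proportional to $l$, and is obtained simply by differentiating the volume per unit length.

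It then remains to compute $V'(r)$. Writing $s=\sinh(r/2)$ and $c=\cosh(r/2)$, so that $s'=c/2$ and $c'=s/2$, the product rule gives
\[V'(r)=\omega_{p+q}2^{p+q-1}\Big((p+q)\,s^{p+q-1}c^{q+1}+q\,s^{p+q+1}c^{q-1}\Big),\]
and $V'(r)\,l$ is exactly the claimed surface formula. This last step is routine. Since the analytic weight of the theorem---the identification of $\sigma$ with the ellipsoid and the evaluation of $\vol(\sigma)$---has already been carried, the one point I would treat with care is the coarea step: namely that for small $r$ the level sets of $u$ foliate the tube exactly by its lateral tubular surfaces, with no additive end-cap contribution. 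This is automatic here, because the enclosed volume $V(r)\,l$ is exactly proportional to $l$ with no constant term, so its $r$-derivative is likewise proportional to $l$. A more direct but messier alternative would be to compute the induced $(n-1)$-dimensional area of the surface swept from $\exp_{\mathfrak n}(\partial\sigma)$ by the isometry flow $\Phi_t$, mirroring the volume computation; the coarea route avoids the Gram-determinant bookkeeping that this would entail.
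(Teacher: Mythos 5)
Your proposal is correct and takes essentially the same route as the paper: the first formula is read off from the preceding computation $\vol(T)=(b-a)\vol(\sigma)$ combined with the tube property (Theorem \ref{thm:harmonic}), and the second formula is obtained by differentiating the first with respect to $r$. The paper asserts the differentiation step in one sentence without justification; your coarea-formula argument (with the observation that the $r$-derivative of $V(r)\,l$ has no end-cap contribution) merely supplies that routine detail explicitly, and your computation of $V'(r)$ matches the stated surface formula.
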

The second part of the theorem follows from the first part by differentiating with respect to the radius $r$.

The theorem generalizes earlier result of A.~Gray and L.~Vanhecke on the volume 
of tubes in rank one non-compact symmetric spaces \cite{Gray-Vanhecke}, as 
$\mathbb CH^n$, $\mathbb HH^n$, and $\mathbb OH^2$ are Damek--Ricci spaces with 
parameters $(p,q)=(2n-2,1)$, $(4n-4,3)$, and $(8,7)$ respectively.

 \section{Manifolds with the tube property are 
2-stein}\label{sec:2-stein}

The main result of this section is the following.
\begin{thm}\label{thm:2-stein}
A manifold having the tube property is a 2-stein space.
\end{thm}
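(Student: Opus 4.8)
I want to show that the tube property forces the manifold to be $2$-stein, meaning that not only $\tr(R_{\mathbf u}) = \rho(\mathbf u,\mathbf u)$ is independent of the unit vector $\mathbf u$ (Einstein), but also $\tr(R_{\mathbf u}^2)$ is constant on the unit sphere bundle, where $R_{\mathbf u} = R(\cdot,\mathbf u)\mathbf u$ is the Jacobi operator. The only analytic input I have about the tube property at this point is equation \eqref{tube3}, which by Theorem \ref{DAtri} says that the quantity
\[
V(r) = -\int_{B_r^{n-1}(\mathbf u)} g(\tilde J^{\mathbf w}_{\mathbf u}{}'(0),\mathbf u)\,\omega(\mathbf w)\id \mathbf w
\]
is independent of $\mathbf u$ for every small $r$. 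The strategy is therefore to expand the integrand as a power series in $r$ (equivalently, to Taylor-expand the Jacobi data along geodesics emanating from $p$ in directions $\mathbf w$ orthogonal to $\mathbf u$), integrate term by term over the equatorial ball $B_r^{n-1}(\mathbf u)$, and read off the coefficients of the resulting expansion in $r$ as curvature invariants of $\mathbf u$. Independence of $V(r)$ from $\mathbf u$ then forces each such coefficient to be constant, and I expect the first nontrivial coefficients to be exactly $\rho(\mathbf u,\mathbf u)$ and a linear combination of $\tr(R_{\mathbf u}^2)$ and $\rho(\mathbf u,\mathbf u)^2$, yielding the Einstein and then the $2$-stein condition.

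**Carrying it out.** First I would set up the small-geodesic-triangle expansions: for a unit vector $\mathbf w$, the geodesic $s\mapsto\Exp(s\mathbf w)$ carries the volume density $\omega$ with the familiar expansion $\omega(s\mathbf w) = 1 - \tfrac{1}{6}\rho(\mathbf w,\mathbf w)s^2 + O(s^3)$, and the boundary-condition Jacobi field $\tilde J^{\mathbf w}_{\mathbf u}$ (the one equal to $\mathbf u$ at the origin and vanishing at parameter $1$) has an initial derivative $\tilde J^{\mathbf w}_{\mathbf u}{}'(0)$ whose inner product with $\mathbf u$ I would expand using the standard Jacobi-equation series in powers of the length $\rho = \|\mathbf w\|$. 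Concretely, writing $\mathbf w = \rho\mathbf e$ with $\mathbf e$ a unit vector orthogonal to $\mathbf u$, the solution operator of the Jacobi equation along $s\mapsto\Exp(s\mathbf w)$ gives
\[
-g(\tilde J^{\mathbf w}_{\mathbf u}{}'(0),\mathbf u) = \frac{1}{\rho}\Bigl(1 + c_2\, g(R_{\mathbf e}\mathbf u,\mathbf u)\,\rho^2 + c_4\, \bigl[\text{fourth-order curvature terms}\bigr]\rho^4 + \cdots\Bigr),
\]
where $R_{\mathbf e}\mathbf u = R(\mathbf u,\mathbf e)\mathbf e$ and the $c_{2k}$ are universal numerical constants. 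Multiplying by $\omega(\mathbf w)$, switching to polar coordinates $\mathbf w = \rho\mathbf e$ on $B_r^{n-1}(\mathbf u)$, and integrating $\mathbf e$ over the unit sphere $S^{n-2}$ of $\mathbf u^{\perp}$, the odd-order terms drop out by the D'Atri (hence even-density) property already established, and the angular integrals of the quadratic and quartic curvature forms over $S^{n-2}$ produce, up to known constants, the partial traces $\sum_i g(R_{\mathbf e_i}\mathbf u,\mathbf u)$ and $\sum_i g(R_{\mathbf e_i}\mathbf u, R_{\mathbf e_i}\mathbf u)$ of the Jacobi operator restricted to $\mathbf u^{\perp}$. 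These are precisely $\rho(\mathbf u,\mathbf u)$ and (a constant multiple of) $\tr(R_{\mathbf u}^2)$ modulo lower-order Einstein corrections.

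**Extracting the conclusion.** With the expansion $V(r) = \sum_{k\ge 0} a_k(\mathbf u)\,r^{2k+\,\mathrm{const}}$ in hand, the hypothesis that $V(r)$ does not depend on $\mathbf u$ forces every coefficient $a_k(\mathbf u)$ to be constant in $\mathbf u$. Reading off the first two nonconstant coefficients gives that $\rho(\mathbf u,\mathbf u)$ is constant on unit vectors (so $M$ is Einstein) and then, after using the Einstein condition to eliminate the $\rho(\mathbf u,\mathbf u)^2$ contribution, that $\tr(R_{\mathbf u}^2)$ is constant on unit vectors, which is exactly the defining condition of a $2$-stein space. I would close by noting that Einstein is the $L_2$ Ledger condition and $2$-stein is $L_2$ together with $L_4$, consistent with the discussion in the introduction.

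**The main obstacle.** The delicate part is not the conceptual strategy but the bookkeeping of the fourth-order term. At order $\rho^4$ several curvature expressions appear — the quartic term in the Jacobi series, the cross term coming from multiplying the $\rho^2$ Jacobi coefficient against the $\rho^2$ term of $\omega$, and contributions involving $\nabla R$ — and I must verify that, after the $S^{n-2}$-angular integration and after imposing the already-derived Einstein condition, all the genuinely new information collapses to the single scalar $\tr(R_{\mathbf u}^2)$, with the $\nabla R$ and Einstein-square contamination either integrating to zero by symmetry or being absorbable into the lower-order constant. Getting the universal angular-integration constants and the Jacobi-series coefficients right, so that the $\rho^4$ coefficient is manifestly a positive multiple of $\tr(R_{\mathbf u}^2)$ plus $\mathbf u$-independent terms, is where the real work lies.
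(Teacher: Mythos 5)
Your overall scheme --- expand the tube-volume integrand in powers of $r$, integrate over the unit sphere of $\mathbf u^{\perp}$, and use that the tube property forces every coefficient to be independent of $\mathbf u$ --- is exactly how the paper's proof begins (it quotes the Gray--Vanhecke Fermi-coordinate expansion $\vol(\mathcal T(\gamma,r))=\omega_{n-1}r^{n-1}\int_0^l(1+Ar^2+Br^4+O(r^6))\,\ud t$ rather than re-deriving it from \eqref{tube3}), and the Einstein conclusion does come out of the $r^2$ coefficient essentially as you say, modulo the small extra step of summing $A=-\tfrac1{6(n+1)}(\tau+\rho_{11})$ over an orthonormal basis to first pin down $\tau$. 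The genuine gap is at order $r^4$, precisely at what you call the main obstacle, and it does not resolve the way you predict. After imposing Einstein, the $\mathbf u$-dependent part of the fourth-order coefficient is \emph{not} a positive multiple of $\tr(R_{\mathbf u}^2)$ plus $\mathbf u$-independent terms; it is the combination
\[
-3\|R\|^2-6P(\mathbf u,\mathbf u)+2\tr(R_{\mathbf u}^2),
\qquad
P(\mathbf u,\mathbf u)=\sum_{i,j,k}R(\mathbf u,\mathbf e_i,\mathbf e_j,\mathbf e_k)^2 .
\]
The $P(\mathbf u,\mathbf u)$ contamination is unavoidable: the quartic term of the Jacobi expansion of $-g(\tilde J^{\mathbf w}_{\mathbf u}{}'(0),\mathbf u)$ is a multiple of $g(R_{\mathbf e}^2\mathbf u,\mathbf u)=\|R(\mathbf u,\mathbf e)\mathbf e\|^2$, and its angular integral produces sums of squares of \emph{all} components $R(\mathbf u,\mathbf e_i,\mathbf e_j,\mathbf e_k)$, not only of the Jacobi-operator components $R(\mathbf u,\mathbf e_i,\mathbf e_j,\mathbf u)$; only the latter assemble into $\tr(R_{\mathbf u}^2)$. (Relatedly, the quantity you name, $\sum_i g(R_{\mathbf e_i}\mathbf u,R_{\mathbf e_i}\mathbf u)$, is $\sum_{i,k}R(\mathbf u,\mathbf e_i,\mathbf e_i,\mathbf e_k)^2$, which is not $\tr(R_{\mathbf u}^2)$: you have silently replaced $R_{\mathbf e_i}\mathbf u$ by $R_{\mathbf u}\mathbf e_i$, and these vectors differ in general.) Moreover $\|R\|^2$ is at this stage an unknown function of the point. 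So constancy of the $r^4$ coefficient says only that one specific linear combination of three a priori independent quantities is constant; the statement ``$\tr(R_{\mathbf u}^2)$ is constant'' does not follow from it by any amount of bookkeeping or symmetry of the angular integrals.

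What is missing --- and what constitutes the real core of the paper's proof --- is an algebraic step that disentangles the three quantities. The paper applies the constancy of the combination above to the rotating vector $\mathbf u(\theta)=\cos\theta\,\mathbf u_a+\sin\theta\,\mathbf u_b$ and invokes an elementary lemma on trigonometric polynomials (if $\theta\mapsto P(\cos\theta,\sin\theta)$ is constant, then the top-degree homogeneous part of $P$ vanishes at $(1,\mathbbm i)$) to extract the pure polarization identity $Q_{aaaa}+Q_{bbbb}=2(Q_{aabb}+Q_{abab}+Q_{abba})$, where $Q_{abcd}=\sum_{i,j}R_{aibj}R_{cidj}$, so that $Q_{aaaa}=\tr(R_{\mathbf u_a}^2)$. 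Summing this identity over $b$ and using the trace identities $\sum_b Q_{abab}=P_{aa}$, $\sum_b Q_{abba}=\tfrac12 P_{aa}$, and $\sum_b Q_{aabb}=K^2$ (valid for Einstein metrics) produces a linear system which simultaneously eliminates $P_{aa}$, forces $\|R\|^2$ to be constant on all of $M$, and yields $Q_{aaaa}$ as a genuine constant --- the last point being needed anyway, since the 2-stein condition demands a single constant $\lambda$ valid at every point. Your proposal would become a proof if supplemented by this polarization-plus-trace argument (or an equivalent device); as written, it stops exactly where the real difficulty begins.
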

We recall that a Riemannian manifold is said to be 2-stein if the 
manifold is Einstein and there exists a constant $\lambda$ such that
\[\tr(R_\mathbf u^2)=\lambda\|\mathbf u\|^4\]
for every tangent vector $\mathbf u$, where for $\mathbf u\in 
T_pM$, $R_\mathbf u\colon T_pM\to T_pM$, $R_{\mathbf u}(\mathbf 
x)=R(\mathbf x,\mathbf u)\mathbf u$ is the Jacobi 
operator.

For the proof, we need an elementary lemma.
 
\begin{lem}\label{Le:trig} Let $P\in\mathbb R[x,y]$ be a polynomial of degree 
$k\geq 1$ in two variables, $P=P_0+\dots+P_k$ be its decomposition into homogeneous 
components. If the function $\theta\mapsto P(\cos \theta,\sin\theta)$ is 
constant, then $P_k(1,\mathbbm i)=0$, where $\mathbbm i\in\mathbb C$ is the 
imaginary unit.
\end{lem}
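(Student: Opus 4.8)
The plan is to convert the trigonometric substitution into a Laurent polynomial in $z=e^{\mathbbm i\theta}$ and then read off the extreme Fourier frequency. Writing $\cos\theta=\tfrac12(z+z^{-1})$ and $\sin\theta=\tfrac1{2\mathbbm i}(z-z^{-1})$, the function $\theta\mapsto P(\cos\theta,\sin\theta)$ becomes the restriction to the unit circle of the Laurent polynomial $f(z)=P\!\big(\tfrac12(z+z^{-1}),\tfrac1{2\mathbbm i}(z-z^{-1})\big)$. A monomial $x^ay^b$ of degree $a+b=d$ contributes only powers $z^m$ with $|m|\le d$, so the extreme powers $z^{k}$ and $z^{-k}$ can arise solely from the top homogeneous component $P_k$. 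Since $f(z)$ agrees with the constant $c:=P(\cos\theta,\sin\theta)$ for every $z$ on the unit circle, the Laurent polynomial $f(z)-c$ has infinitely many zeros and therefore vanishes identically; in particular its coefficient of $z^{-k}$ is $0$. As $k\ge1$, it suffices to show that this coefficient is a nonzero multiple of $P_k(1,\mathbbm i)$.

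To extract that coefficient cleanly I would use homogeneity rather than expand binomials. First I would factor $\tfrac12(z+z^{-1})=z^{-1}\tfrac12(z^2+1)$ and $\tfrac1{2\mathbbm i}(z-z^{-1})=z^{-1}\tfrac1{2\mathbbm i}(z^2-1)$, so that the contribution of $P_k$ to $f$ is
\[
P_k\!\Big(\tfrac12(z+z^{-1}),\tfrac1{2\mathbbm i}(z-z^{-1})\Big)=z^{-k}\,P_k\!\Big(\tfrac12(z^2+1),\tfrac1{2\mathbbm i}(z^2-1)\Big).
\]
The second factor is a polynomial in $z^2$, so the coefficient of $z^{-k}$ in this expression is exactly its value at $z=0$, namely $P_k(\tfrac12,-\tfrac1{2\mathbbm i})=P_k(\tfrac12,\tfrac{\mathbbm i}2)=2^{-k}P_k(1,\mathbbm i)$ by homogeneity once more. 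Since the lower-degree components $P_0,\dots,P_{k-1}$ contribute nothing to the power $z^{-k}$, this is the full coefficient of $z^{-k}$ in $f$. Combining with the first paragraph gives $2^{-k}P_k(1,\mathbbm i)=0$, hence $P_k(1,\mathbbm i)=0$.

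The argument is essentially bookkeeping, and the only point requiring genuine care is the claim that the top frequency is fed exclusively by $P_k$; the homogeneity factorization above isolates the $z^{-k}$ coefficient without any binomial expansion and simultaneously makes the evaluation at $(1,\mathbbm i)$ transparent. If one preferred to avoid the complex substitution, the same conclusion follows by expanding $P_k(\cos\theta,\sin\theta)$ directly and computing the coefficient of $e^{-\mathbbm i k\theta}$ term by term, but the $z$-substitution is shorter and I would favour it.
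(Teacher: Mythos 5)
Your proof is correct, but it follows a genuinely different route from the paper's. The paper argues algebraically: since $P-P(1,0)$ vanishes on the circle and $x^2+y^2-1$ is irreducible, it divides $P-P(1,0)$ in $\mathbb R[x,y]$; comparing top homogeneous components gives $P_k(x,y)=(x^2+y^2)G_{k-2}(x,y)$, and substituting $(1,\mathbbm i)$ kills the factor $x^2+y^2$. You instead work with the Laurent polynomial $f(z)=P\bigl(\tfrac12(z+z^{-1}),\tfrac1{2\mathbbm i}(z-z^{-1})\bigr)$ and extract the extreme Fourier coefficient: the homogeneity factorization correctly isolates the $z^{-k}$ coefficient as $2^{-k}P_k(1,\mathbbm i)$, the lower components cannot reach frequency $-k$, and constancy on the unit circle forces $f-c$ to vanish identically. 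Each approach has its merits: the paper's proof is shorter but leans on the (standard, yet not entirely trivial) fact that the vanishing ideal of the real circle is the principal ideal $(x^2+y^2-1)$; your proof is fully self-contained, needing only that a Laurent polynomial with infinitely many zeros is identically zero, and it makes the meaning of the conclusion transparent --- $P_k(1,\mathbbm i)$ is, up to the factor $2^{-k}$, precisely the top-frequency Fourier coefficient of $\theta\mapsto P(\cos\theta,\sin\theta)$, which explains why exactly this quantity must vanish when the function is constant.
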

\begin{proof}
Since the polynomial $P-P(1,0)$ vanishes on the circle $x^2+y^2-1=0$, the 
irreducible 
polynomial $x^2+y^2-1$ divides $P-P(1,0)$, i.e., there is a polynomial $G\in 
\mathbb 
R[x,y]$ such that $P(x,y)-P(1,0)=(x^2+y^2-1)G(x,y)$. Considering the highest 
degree 
homogeneous component of both sides, we obtain $P_k(x,y)=(x^2+y^2)G_{k-2}(x,y)$, 
where $G_{k-2}$ is the degree $k-2$ homogeneous part of $G$. Substituting 
$(x,y)=(1,\mathbbm i)$ into the last equation gives $P_k(1,\mathbbm i)=0$.
\end{proof}

\begin{proof}[Proof of Theorem \ref{thm:2-stein}]
Denote by $\tau, \rho,$ and $R$ the scalar curvature, the Ricci tensor, and the 
Riemannian curvature tensor respectively. A.~Gray and L.~Vanhecke  
\cite{Gray-Vanhecke} computed the initial terms of the Taylor series of the 
volume of tubes about a curve using Fermi coordinates. Recall that the Fermi 
coordinate system on the tube $\mathcal T(\gamma, r)$ about the injective unit 
speed curve $\gamma\colon [0,l]\to M$ is the inverse of the  
parameterization $\tilde {\mathbf r}\colon[0,l] \times B_r^{n-1}\to M$, 
$\tilde{\mathbf r}(x_1,x_2,\dots,x_n)=\mathbf r(x_2,\dots,x_n,x_1),$
where $\mathbf r$ is the parameterization defined in \eqref{parameterization}. 
They obtained the formula 
\[\vol(\mathcal T(\gamma, 
r))=\omega_{n-1}r^{n-1}\int_0^l\left(1+Ar^2+Br^4+O(r^6)\right)(\gamma(t))\id 
t\]
with coefficients 
\begin{equation*}
A=-\frac1{6(n+1)}(\tau+\rho_{11}),
\end{equation*}
\[\begin{split}
B&=\frac1{360(n+1)(n+3)}
\Big(-18\Delta\tau+5\tau^2+8\|\rho\|^2-3\|R\|^2+33\nabla_{11}^2\tau-9\Delta\rho_
{11}+10\tau\rho_{11}+2\sum_{i=2}^n\rho_{1i}^2+{}\\
{}+ 
{}&14\sum_{i,j=2}^n\rho_{ij}R_{1i1j}-6\sum_{i,j,k=2}^nR_{1ijk}^2-21\nabla_{11}
^2\rho_{11}-3\rho_{11}^2-10\sum_{i,j=2}^nR_{1i1j}^2+60 W\tau-60 
\nabla_{1}\rho_{1 W}-30\nabla_{W}\rho_{11}\Big),
\end{split}\]
where $W$ is a vector field such that $W(\gamma(t))=\gamma''(t)$, and the tensor 
coordinates are taken with respect to the Fermi coordinate system.

If the manifold has the tube property, then $A$ and $B$ are constant along 
$\gamma$, and their values do not depend on the curve $\gamma$. For an arbitrary 
point $p\in M$, choose an orthonormal basis $\mathbf u_1,\dots,\mathbf u_n$ in 
$T_pM$. Computing the coefficient $A$ associated to the geodesic curve $t 
\mapsto\Exp(t\mathbf u_i)$ at $t=0$, we obtain
\begin{equation*}
A=-\frac1{6(n+1)}(\tau(p)+\rho(\mathbf u_i,\mathbf u_i))\quad\text{ for 
}i=1,\dots,n.
\end{equation*}
Summation for $i$ yields that $\tau=-6nA$, from which $\rho(\mathbf u_i,\mathbf 
u_i)=-6A$ for all $i$, hence the manifold is Einstein.

In the case of a geodesic curve in an Einstein manifold, $B$ can be simplified 
to
\[B=\frac1{360(n+1)(n+3)}\left(5\tau^2+8\|\rho\|^2-3\|R\|^2+10\tau\rho_{11}
+14\rho_{11}^2-6\sum_{i,j,k=2}^nR_{1ijk}^2-3\rho_{11}^2-10\sum_{i,j=2}^nR_{1i1j}
^2\right).\]

From this, we can conclude that in a manifold having tube property, the sum
\[-3\|R\|^2-6\sum_{i,j,k=2}^nR_{1ijk}^2-10\sum_{i,j=2}^nR_{1i1j}^2\]
is a constant function along the curve $\gamma$ and the value $C$ of this 
constant does not depend on the curve.

Observe that for any point $p\in M$, and for any orthonormal basis $\mathbf 
u_1,\dots,\mathbf u_n$ in $T_pM$, there is a unit speed curve $\gamma\colon 
[0,l]\to M$ starting at $\gamma(0)=p$ and a Fermi coordinate system around 
$\gamma$ such that the basis vector fields induced by the coordinate system are 
equal to  $\mathbf u_1,\dots,\mathbf u_n$ at $p$. This means that for any 
orthonormal basis $\mathbf u_1,\dots,\mathbf u_n$ of $T_pM$, the coordinates 
$R_{ijkl}=R(\mathbf u_i,\mathbf u_j,\mathbf u_k,\mathbf u_l)$ of the curvature 
tensor should satisfy the identity
$$
C=-3\|R\|^2-6\sum_{i,j,k=2}^nR_{1ijk}^2-10\sum_{i,j=2}^nR_{1i1j}
^2=-3\|R\|^2-6\sum_{i,j,k=1}^nR_{1ijk}^2+2\sum_{i,j=1}^nR_{1i1j}^2.
$$ 

In particular, transposing the role of $\mathbf u_1$ and $\mathbf u_a$, we 
obtain
\[C=-3\|R\|^2-6\sum_{i,j,k=1}^nR_{aijk}^2+2\sum_{i,j=1}^nR_{aiaj}^2\quad 
\text{ for } 1\leq a\leq n.\]

Introduce the tensor fields $P$ and $Q$ by their components
\begin{align*}
P_{ab}&=\sum_{i,j,k=1}^nR_{aijk}R_{bijk},\\
Q_{abcd}&=\sum_{i,j=1}^nR_{aibj}R_{cidj}.
\end{align*}
We have the identities
\[P_{ab}=P_{ba},\qquad Q_{abcd}=Q_{badc}=Q_{cdab},\]
\[\sum_{b=1}^n Q_{abab}=P_{aa},\quad\sum_{b=1}^n Q_{abba}=\frac12P_{aa},\quad
\sum_{a=1}^n P_{aa}=\|R\|^2.\]
In an Einstein manifold (with $\rho=Kg$), we have
\[\sum_{b=1}^n Q_{aabb}=K^2.\]

With this notation, we can write $C$ as
\begin{equation}\label{C_l}
C=-3\|R\|^2-6P_{aa}+2Q_{aaaa}.
\end{equation}
 Equivalently, 
\begin{equation}\label{C_ll}
C=-3\|R\|^2-6P(\mathbf{u},\mathbf{u})+2Q(\mathbf{u},\mathbf{u},\mathbf{u},
\mathbf{u})
\end{equation}
for any unit tangent vector $\mathbf{u}$.
Summing \eqref{C_l} over $a$ gives
\[nC=-3n\|R\|^2-6\|R\|^2+2\sum_{a=1}^nQ_{aaaa},\]
so we have
\begin{equation}\label{eq12}
\sum_{a=1}^nQ_{aaaa}=\frac n2C+\frac{3n+6}2\|R\|^2.
\end{equation}

Assume that the tensor components are taken with respect to the orthonormal 
basis $\mathbf u_1,\dots,\mathbf u_n$ at $p$. 
Apply equation (\ref{C_ll}) for the unit tangent vector $\mathbf 
u(\theta)=\cos\theta \mathbf u_a+\sin\theta \mathbf u_b$, $(a\neq b)$. Then we 
get that the function $f(\theta)=-3P(\mathbf u(\theta),\mathbf 
u(\theta))+Q(\mathbf u(\theta),\mathbf u(\theta),\mathbf u(\theta),\mathbf 
u(\theta))$ is constant. 
The function $f$ is a polynomial of degree $4$ of $\cos \theta$ and $\sin 
\theta$, and its degree $4$ homogeneous term is $Q(\mathbf u(\theta),\mathbf 
u(\theta),\mathbf u(\theta),\mathbf u(\theta))$. Applying Lemma \ref{Le:trig}, 
we obtain
\begin{equation*}
Q(\mathbf u_a+\mathbbm i \mathbf u_b,\mathbf u_a+\mathbbm i \mathbf u_b,\mathbf 
u_a+\mathbbm i \mathbf u_b,\mathbf u_a+\mathbbm i \mathbf u_b)=0.
\end{equation*}
The real part of this equation is 
\begin{equation}\label{eq:10.5}
Q_{aaaa}+Q_{bbbb}=2(Q_{aabb}+Q_{abab}+Q_{abba}).
\end{equation}
Sum \eqref{eq:10.5} for all $b$ not equal to $a$, and add $6Q_{aaaa}$ to both 
sides. Then we obtain
\begin{equation*}
(n+4)Q_{aaaa}+\sum_{b=1}^nQ_{bbbb}=2\left(K^2+P_{aa}+\frac12P_{aa}\right).
\end{equation*}
Using \eqref{C_l} and \eqref{eq12}, we get
\begin{equation*}
(n+4)Q_{aaaa}+\frac n2C+\frac{3n+6}2\|R\|^2=2K^2- 
\frac32\|R\|^2+Q_{aaaa}-\frac{C}2,
\end{equation*}
that is
\begin{equation*}
(n+3)Q_{aaaa}+\frac {n+1}2C+\frac{3n+9}2\|R\|^2=2K^2,
\end{equation*}
which gives
\begin{equation}\label{eq15}
Q_{aaaa}=-\frac32\|R\|^2+\frac2{n+3}K^2-\frac {n+1}{2(n+3)}C.
\end{equation}
If we sum (\ref{eq15}) for $a$, we obtain
\begin{equation}\label{eq16}
\sum_{a=1}^nQ_{aaaa}=-\frac{3n}2\|R\|^2+\frac{2n}{n+3}K^2-\frac 
{n^2+n}{2(n+3)}C.
\end{equation}
Equations (\ref{eq12}) and (\ref{eq16}) show that
\begin{equation*}
-\frac{3n}2\|R\|^2+\frac{2n}{n+3}K^2-\frac {n^2+n}{2(n+3)}C=\frac 
n2C+\frac{3n+6}2\|R\|^2,
\end{equation*}
which implies that $\|R\|^2$ is constant on the manifold. By equation 
(\ref{eq15}), we can conclude that $Q_{aaaa}=\tr \big(R_{\mathbf u_a}^2\big)$ 
is 
constant, which means that the manifold is 2-stein. We remark that equation 
(\ref{C_l}) implies also that $P_{aa}$ is constant. 
\end{proof}

\section{Tube property in symmetric spaces}\label{sec:symmetric}

In this section, we prove that a symmetric space has the tube property if and  
only if it is harmonic. Using that Jacobi fields can be computed in a 
symmetric space, first we transform formula \eqref{tube_volume2} 
to a more explicit form, see equation \eqref{integral} below. 

\subsection{Volume of tubes about geodesics in a symmetric space}

Consider a unit speed geodesic curve $\gamma\colon [0,l]\to M$ in a symmetric 
space $M$, and fix a parallel orthonormal frame $E_1,\dots,E_n$ along it such 
that $E_n=\gamma'$. The volume of a tube about $\gamma$ is given by 
\eqref{tube_volume2}. This formula uses Jacobi fields along geodesic curves  
starting from a point of $\gamma$ orthogonally to $\gamma'$.

Fix a point $p=\gamma(t)$ and a unit vector $\mathbf u\in T_pM$ such that 
$\mathbf u\perp \gamma'(t)$. Let $\eta$ be the geodesic starting at $\eta(0)=p$ 
with initial velocity $\eta'(0)=\mathbf 
u$. Denote by $\mathbf e_1,\dots,\mathbf e_n$ the parallel orthonormal frame 
along $\eta$ extending the orthonormal basis $E_1(t),\dots,E_n(t)$ of 
$T_{\eta(0)}M$.  If $X$ is a vector field along $\eta$, then we shall denote by 
$[X]$ the 
column vector of the coordinate functions of $X$ with respect to the frame 
$\mathbf 
e_1,\dots,\mathbf e_n$. In a symmetric space, the matrix of the Jacobi operator 
$R_{\eta'}$ with respect to the frame $\mathbf 
e_1,\dots,\mathbf e_n$ is constant. Its value at $0$ is the matrix $[R_{\mathbf 
u}]$ of the Jacobi operator $R_{\mathbf u}=R_{\eta'(0)}$. The coordinate vector 
of a Jacobi field 
$J$ along $\eta$ satisfies the Jacobi differential equation
\[[J]''(t)+[R_{\mathbf u}][J](t)=0.\]
The general solution of this equation is 
\[[J](t)=\cos \left(\sqrt{[R_{\mathbf u}]} t\right)\mathbf 
a+\frac{\sin\left(\sqrt{[R_{\mathbf u}]} t\right)}{\sqrt{[R_{\mathbf 
u}]}}\mathbf b,\qquad \mathbf a,\mathbf b\in\R^{n},\]
where trigonometric function symbols abbreviate their power series, i.e.,  
\[
\cos\left(\sqrt{[R_{\mathbf u}]}t\right)=\sum_{k=0}^{\infty} 
(-1)^k\frac{[R_{\mathbf u}]^kt^{2k}}{(2k)!},\qquad 
\frac{\sin\left(\sqrt{[R_{\mathbf u}]} t\right)}{\sqrt{[R_{\mathbf 
u}]}} =\sum_{k=0}^{\infty} 
(-1)^k\frac{[R_{\mathbf u}]^kt^{2k+1}}{(2k+1)!}
\]
which make sense without clarifying what the square root of $R_{\mathbf 
u}$ and the inverse of the square root are. 
Consider the Jacobi fields $J_1^{\mathbf y},\dots,J_{n-1}^{\mathbf 
y},\hat J^{\mathbf y}$ along the geodesic $\eta^{\mathbf y}$ and their 
coordinates with respect to the parallel frame $\mathbf 
e_1^{\mathbf y},\dots,\mathbf e_n^{\mathbf y}$ as described in Section 
\ref{sec:intro}.
We have $J_i^{\mathbf y}(0)=\mathbf 0, {J_i^{\mathbf y}}'(0)=\mathbf 
e_i^{\mathbf y}(0)$, hence 
\[[J_i^{\mathbf 
y}](t)=\frac{\sin\left(\sqrt{[R_{\mathbf u}]} t\right)}{\sqrt{[R_{\mathbf 
u}]}}[\mathbf e_i^{\mathbf y}](0)\quad\text{ for }i=1,\dots, n-1,
\]
where $\mathbf u=\eta^{\mathbf y}{}'(0)$.

As $\hat J^{\mathbf y}(0)=\mathbf e_n^{\mathbf y}(0)$ and ${{\hat J}^{\mathbf 
y}}{}'(0)=\mathbf 0$, 
\[[\hat J^{\mathbf y}](t)=\cos\left(\sqrt{[R_{\mathbf u}]} t\right)[\mathbf 
e_n^{\mathbf y}](0)=\sqrt{[R_{\mathbf u}]}\cot\left(\sqrt{[R_{\mathbf u}]} 
t\right)\frac{\sin\left(\sqrt{[R_{\mathbf u}]} t\right)}{\sqrt{[R_{\mathbf 
u}]}}[\mathbf e_n^{\mathbf y}](0).\]
Thus,
\begin{equation}\label{Jacobi_norm}
\|J_1^{\mathbf y}\wedge\dots\wedge J_{n-1}^{\mathbf y}\wedge \hat J^{\mathbf 
y}\|(\rho)=\det\left(\frac{\sin\left(\sqrt{R_{\mathbf u}} 
\rho\right)}{\sqrt{R_{\mathbf u}}}\right)\cdot\left\langle\sqrt{R_{\mathbf 
u}}\cot\left(\sqrt{R_{\mathbf u}} \rho\right)\mathbf e_n^{\mathbf y}(0),\mathbf 
e_n^{\mathbf y}(0)\right\rangle.
\end{equation}

If the eigenvalues of $R_{\mathbf u}$ are $\lambda_1,\dots,\lambda_n$, then 
\[\det\left(\frac{\sin(\sqrt{R_{\mathbf u}}\rho)}{\sqrt{R_{\mathbf 
u}}\rho}\right)=\prod_{i=1}^n\frac{\sin(\sqrt{\lambda_i}\rho)}{\sqrt{\lambda_i}
\rho} .\]
Our goal is to express this determinant with the help of the power sums 
$S_k=\lambda_1^k+\dots+\lambda_n^k=\tr(R_{\mathbf u}^k)$.

Let $b_k$ be the coefficient of $x^{2k}$ in the Maclaurin series of the even 
analytic function $x\cot x$. Then
\[x\cot x=\sum_{k=0}^\infty b_kx^{2k}\quad\text{ for } |x|<\pi.\]
It is known that $b_0=1$ and $b_k= (-4)^k B_{2k}/(2k)!<0$ for $k>0$, where 
$B_m$ denotes the $m$th Bernoulli number.
With this notation, if $|x|<\pi$, then
\[x\left(\log\left(\frac{\sin x}x\right)\right)'=x\cot x-1=\sum_{k=1}^\infty 
b_kx^{2k},\]
which gives
\[\log\left(\frac{\sin x}x\right)=\sum_{k=1}^\infty \frac{b_k}{2k}x^{2k}.\]
Thus, if $\rho\geq 0$ is sufficiently small, namely if 
$|\sqrt{\lambda_i}\rho|<\pi$ for all eigenvalues $\lambda_i$, then
\[\log\left(\prod_{i=1}^n\frac{\sin(\sqrt{\lambda_i}\rho)}{\sqrt{\lambda_i}\rho}
\right)=\sum_{k=1}^\infty 
\frac{b_k}{2k}\sum_{i=1}^n\lambda_i^k\rho^{2k}=\sum_{k=1}^\infty 
\frac{b_k}{2k}S_k\rho^{2k},\]
hence
\[\det\left(\frac{\sin(\sqrt{R_{\mathbf u}}\rho)}{\sqrt{R_{\mathbf 
u}}\rho}\right)=\prod_{i=1}^n\frac{\sin(\sqrt{\lambda_i}\rho)}{\sqrt{\lambda_i}
\rho }
=\sum_{m=0}
^\infty \frac1{m!}\left(\sum_{k=1}^\infty 
\frac{b_k}{2k}S_k\rho^{2k}\right)^m=\sum_{m=0}^\infty 
\frac1{m!}\left(\sum_{k=1}^\infty 
\frac{b_k}{2k}\tr(R_{\mathbf u}^k)\rho^{2k}\right)^m.\]

Combining this equation with \eqref{Jacobi_norm} and (\ref{tube_volume2}), we 
obtain that for small values of $r$,
\begin{equation}\label{integral}\vol (\mathcal T(\gamma,r))=\int\limits_0^l\int\limits_0^r
\int\limits_{S^{n-2}_{1}(\gamma'(t))}\rho^{n-2}\left(
\sum_{m=0}^\infty \frac1{m!}\left(\sum_{k=1}^\infty 
\frac{b_k}{2k}\tr(R_{\mathbf u}^k)\rho^{2k}\right)^m
\right)\sum_{k=0}^\infty 
b_k\langle R_{\mathbf u}^k(\gamma'(t)),\gamma'(t)\rangle\rho^{2k}\id\mathbf u\id 
\rho \id t.
\end{equation}

\subsection{Symmetric spaces having the tube property}
Now we prove the following theorem.
\begin{thm}\label{thm:sym}
Every symmetric space having the tube property is either Euclidean or has rank one.
\end{thm}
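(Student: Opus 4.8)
The plan is to turn the tube property into an infinitesimal, pointwise condition on the curvature tensor and then to read it off from the root space picture of a symmetric space. A symmetric space is a D'Atri space, so by Theorem \ref{DAtri} the tube property is equivalent to the tube property for geodesics, i.e.\ to the requirement that the inner double integral in \eqref{integral} be independent of the unit velocity $\mathbf a=\gamma'(t)$. Since $R$ is parallel, at a fixed point $p$ the integrand of \eqref{integral} is an algebraic expression in the curvature at $p$ and in the two orthonormal vectors $\mathbf a$ and $\mathbf u$. Expanding in powers of $r$ and matching coefficients, the tube property becomes the assertion that for every $j\geq 0$ the curvature function
\[ I_j(\mathbf a)=\int_{S^{n-2}_1(\mathbf a)} c_j(\mathbf a,\mathbf u)\id\mathbf u \]
is constant in $\mathbf a$, where $c_j$ is the coefficient of $\rho^{2j}$ in the product of the density factor $\det\!\big(\tfrac{\sin(\sqrt{R_{\mathbf u}}\rho)}{\sqrt{R_{\mathbf u}}\rho}\big)=\sum_m\tfrac1{m!}\big(\sum_k\tfrac{b_k}{2k}\tr(R_{\mathbf u}^k)\rho^{2k}\big)^m$ and the director factor $\sum_k b_k\langle R_{\mathbf u}^k\mathbf a,\mathbf a\rangle\rho^{2k}$.

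By Theorem \ref{thm:2-stein} the space is $2$-stein, hence Einstein, so $I_0,I_1,I_2$ are automatically constant; by the classification \cite{Carpenter} it remains to exclude the $23$ dual pairs of irreducible $2$-stein symmetric spaces of rank $\geq 2$. I would do this in one stroke by showing that constancy of the $I_j$ forces the higher stein conditions $g_j(\mathbf u):=\tr(R_{\mathbf u}^j)=\mathrm{const}$ for all $j$, which turns out to be incompatible with rank $\geq 2$.

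For the extraction, separate the two contributions to $c_j$. The density factor depends on $\mathbf u$ alone, and its only order-$j$ term that is not a product of lower power sums $\tr(R_{\mathbf u}^k)$, $k<j$, is $\tfrac{b_j}{2j}\tr(R_{\mathbf u}^j)$; the director factor contributes the couplings $b_k\langle R_{\mathbf u}^k\mathbf a,\mathbf a\rangle$. Working by induction on $j$ and assuming $g_k=\mathrm{const}$ for $k<j$, the identity $I_j=\mathrm{const}$ collapses to
\[ \tfrac{b_j}{2j}\,\mathcal F g_j(\mathbf a)+b_j\int_{S^{n-2}_1(\mathbf a)}\langle R_{\mathbf u}^j\mathbf a,\mathbf a\rangle\id\mathbf u+(\text{lower director terms})=\mathrm{const}, \]
where $\mathcal F h(\mathbf a)=\int_{S^{n-2}_1(\mathbf a)}h(\mathbf u)\id\mathbf u$ is the Funk transform. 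The pure trace piece is the Funk transform of the \emph{even} function $g_j$, and a constant has constant Funk transform; so by the injectivity of the Funk transform on even functions (Theorem 1.7 in \cite{Radon}, already used for Theorem \ref{DAtri}), once the director integrals are shown to be constant in $\mathbf a$ one concludes $g_j=\mathrm{const}$. Handling the director integrals $\int_{S^{n-2}_1(\mathbf a)}\langle R_{\mathbf u}^k\mathbf a,\mathbf a\rangle\id\mathbf u$ is the crux, and the step I expect to be the main obstacle, since they genuinely couple $\mathbf a$ to $\mathbf u$ and are not controlled by the lower stein conditions alone. I would attack them by separating spherical harmonic degrees in $\mathbf a$, so as to isolate the $\mathcal F$-image of the even function $g_j$, and by integrating the identity a second time over $\mathbf a\in S^{n-1}_1(p)$, exploiting the symmetry of the resulting integral over orthonormal pairs $(\mathbf a,\mathbf u)$ to express the director averages back in terms of the traces.

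Finally, write $M=G/K$ with $\mathfrak g=\mathfrak k\oplus\mathfrak p$, identify $T_pM=\mathfrak p$, and use $R_{\mathbf u}=-(\ad\mathbf u)^2|_{\mathfrak p}$. On a maximal flat $\mathfrak a\subset\mathfrak p$ the eigenvalues of $R_{\mathbf u}$ are the numbers $\alpha(\mathbf u)^2$, so $g_j(\mathbf u)=\sum_\alpha m_\alpha\,\alpha(\mathbf u)^{2j}$, the sum over restricted roots with multiplicities. The conditions $g_j=\mathrm{const}$ for all $j$ force $\sum_\alpha m_\alpha\alpha(\mathbf u)^{2j}=c_j\|\mathbf u\|^{2j}$ on the unit sphere of $\mathfrak a$; taking $2j$-th roots and letting $j\to\infty$ makes $\max_\alpha|\alpha(\mathbf u)|$ constant on that sphere, which for a nonempty finite root system happens only when $\dim\mathfrak a=1$. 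Hence either the restricted root system is empty (the space is flat, so Euclidean) or the rank is one, which is the claim. As a safety net, should the extraction of the previous step yield only finitely many stein conditions, say through $g_3=\mathrm{const}$, this already rules out each member of the finite list of \cite{Carpenter}, again giving the theorem.
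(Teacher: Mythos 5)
Your skeleton matches the paper at both ends: reduction to geodesics via Theorem \ref{DAtri}, expansion of \eqref{integral} into per-order integral conditions (the paper's \eqref{k-integral}), and a final restricted-root argument (your ``all-stein conditions force rank $\leq 1$'' is essentially the paper's Lemma \ref{lem:Iwasawa}). But the middle step --- the one you yourself flag as ``the crux'' --- is a genuine gap, not a technicality. Write $g_j(\mathbf u)=\tr(R_{\mathbf u}^j)$ and $D_l(\mathbf e)=\int_{S^{n-2}_1(\mathbf e)}\langle R_{\mathbf u}^l\mathbf e,\mathbf e\rangle\id\mathbf u$. Theorem \ref{thm:2-stein} makes $g_1,g_2$ constant, so the order-$1$ and order-$2$ identities do close and give $D_1,D_2$ constant. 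At order $3$, however, the identity collapses to
\begin{equation*}
\frac{b_3}{6}\int_{S^{n-2}_1(\mathbf e)}g_3(\mathbf u)\id\mathbf u+b_3 D_3(\mathbf e)=\mathrm{const},
\end{equation*}
one equation in \emph{two} unknown functions of $\mathbf e$, and each subsequent order introduces two new unknowns (the Funk transform of $g_j$ and the director integral $D_j$) while contributing only one new equation. The system never closes, so Funk-transform injectivity never gets to act on $g_j$ alone. Your two proposed remedies do not repair this: integrating a second time over $\mathbf e\in S^{n-1}_1$ yields only averaged relations, and the orthonormal-pair symmetry you invoke is unavailable because $\langle R_{\mathbf u}^l\mathbf e,\mathbf e\rangle\neq\langle R_{\mathbf e}^l\mathbf u,\mathbf u\rangle$ for $l\geq 2$; the spherical-harmonics separation is not carried out. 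Your ``safety net'' is likewise unsubstantiated: you would need to verify that none of the $23$ dual pairs of \cite{Carpenter} is $3$-stein (or satisfies whatever finite set of conditions you do extract), which is exactly the case-by-case check the paper set out to avoid, and you give no argument for it.

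The paper escapes this trap by a different extraction, and it is worth internalizing why it works. Instead of treating \eqref{k-integral} as a functional equation on the whole unit sphere, it takes two orthonormal vectors $\mathbf a_1,\mathbf a_2$ in a maximal flat, rotates $\mathbf e=\mathbf b_1(\theta)=\cos\theta\,\mathbf a_1+\sin\theta\,\mathbf a_2$, and observes that each side of \eqref{k-integral} is then a trigonometric polynomial of degree $\leq 2k$ in $\theta$; constancy in $\theta$ forces, by Lemma \ref{Le:trig}, the vanishing of the top homogeneous part at $(\cos\theta,\sin\theta)=(1,\mathbbm i)$. The key point is that after this complexification the trace terms and the director terms are no longer independent unknowns: since $\ad(-\mathbbm i\,\mathbf a_1+\mathbf a_2)=-\mathbbm i\,\ad(\mathbf a_1+\mathbbm i\,\mathbf a_2)$, both become scalar multiples of the single quantity $\tr_{\mathbb C\otimes\mathfrak p}\big(\ad^{2k}(\mathbf a_1+\mathbbm i\,\mathbf a_2)\big)$, and the total coefficient $\big(\tfrac{C_1}{2k}-C_2\big)b_k$ is nonzero because $C_1/C_2=2k-1\neq 2k$ (Proposition \ref{lem:vol}). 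So each order yields one scalar equation in one unknown, the induction closes (Lemma \ref{tr}), and the resulting conclusion --- vanishing of these complexified traces, which is weaker than your all-stein conditions but still sufficient --- feeds into the Iwasawa/root argument. To rescue your route you would either have to prove your step that the tube property implies $g_j=\mathrm{const}$ for all $j$ (which the above shows cannot be done order by order with Funk injectivity alone), or adopt this two-flat rotation trick, at which point your proof becomes the paper's.
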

We remark that a symmetric space is harmonic if and only if it has rank one or 
it is Euclidean (see, e.g., \cite{Eschenburg}).

The proof is based on the following observation and two lemmas. By equation 
\eqref{integral}, in a symmetric space having the tube 
property, the integral
\begin{equation*}
\int_{S^{n-2}_{1}(\mathbf e)}\rho^{n-2}\left(
\sum_{m=0}^\infty \frac1{m!}\left(\sum_{k=1}^\infty 
\frac{b_k}{2k}\tr(R_{\mathbf u}^k)\rho^{2k}\right)^m
\right)\sum_{k=0}^\infty 
b_k\langle R_{\mathbf u}^k\mathbf e,\mathbf e\rangle\rho^{2k}\id\mathbf u
\end{equation*}
does not depend on the unit tangent vector $\mathbf e$. Taking the coefficients 
of the Taylor series expansion with respect to $\rho$, we get that for all 
positive integers $k$, the integral
\begin{equation}\label{k-integral}
\int_{S^{n-2}_1(\mathbf e)}\frac{b_k}{2k}\tr(R_{\mathbf 
u}^k)+\Big\{\sum_{\substack{0\leq l<k,\\1\leq l_1,\dots,l_m<k\\l+\sum_{i=1}^m 
l_i=k}}\frac{b_l}{m!}\langle R_{\mathbf u}^l\mathbf e,\mathbf 
e\rangle\prod_{i=1}^m\frac{b_{l_i}}{2l_i}\tr(R_{\mathbf 
u}^{l_i})\Big\}+b_k\langle R_{\mathbf u}^k\mathbf e,\mathbf e\rangle\id\mathbf u
\end{equation}
is also independent of the unit tangent vector $\mathbf e$.

We prove Theorem \ref{thm:sym} by contradiction.
Assume that there is a nonflat symmetric space $M=G/K$ of rank $r>1$ having the 
tube property, where $G$ is the identity component of the isometry group of $M$, 
and $K$ is the stabilizer of a point $o\in M$. If $I_o$ is the geodesic 
reflection in the point $o$, then $K$ is an open subgroup of the fixed point set 
of the involutive automorphism $\sigma\in\text{Aut}(G)$, $\sigma(h)=I_o\circ 
h\circ I_o$.  Let $\mathfrak g$ and $\mathfrak k$ be the Lie algebras of $G$ and 
$K$. The derivative map $T\sigma$ of $\sigma$ gives an involutive automorphism 
$s=T\sigma|_{\mathfrak g}$ of the Lie algebra $\mathfrak g$. Setting $\mathfrak 
p=\{\mathbf v\in\mathfrak g\mid s(\mathbf v)=-\mathbf v\}$, we have $\mathfrak 
g=\mathfrak k\oplus \mathfrak p$.
There is a natural isomorphism $T_oM\cong \mathfrak p$, which induces an inner 
product $\langle\,,\rangle$ on $\mathfrak p$. The pair $(\mathfrak g,s)$ is an 
orthogonal symmetric Lie algebra.

 By Theorem \ref{thm:2-stein}, $M$ is 2-stein, and consequently, it is (locally) 
irreducible (see \cite{Carpenter}). In particular, $M$ is a symmetric space of 
either compact or non-compact type, and $\langle\, ,\rangle$ is a constant 
multiple of the restriction of the Killing form of $\mathfrak g$ onto $\mathfrak 
p$. This constant multiple of the Killing form extends $\langle\, ,\rangle$ to a 
non-degenerate invariant symmetric bilinear function on $\mathfrak g$, which 
will also be denoted by $\langle\, ,\rangle$.
 
The rank of $M$ is the dimension of a maximal abelian subspace $\mathfrak a$ of 
$\mathfrak p$. Choose an orthonormal basis $\mathbf a_1,\mathbf 
a_2,\dots,\mathbf a_r$ of  $\mathfrak a$, and complete it to an orthonormal 
basis of $\mathfrak p$ with $\mathbf a_{r+1},\dots,\mathbf a_n$. Let
\begin{align*}
\mathbf b_1&=\cos\theta\,\mathbf a_1+\sin\theta\,\mathbf a_2,\\
\mathbf b_{2}&=-\sin\theta\,\mathbf a_1+\cos\theta\,\mathbf a_2,\\
\mathbf b_i&=\mathbf a_{i}\text{ if }i\geq3.
\end{align*}
When we want to emphasize that $\mathbf b_1$ and $\mathbf b_2$ depend on 
$\theta$, we shall denote them by $\mathbf b_1(\theta)$ and $\mathbf 
b_2(\theta)$.

Our plan to prove Theorem \ref{thm:sym} is to evaluate the integral 
\eqref{k-integral} for $\mathbf e=\mathbf e(\theta)=\mathbf b_1(\theta)$ and 
exploit its independence of $\theta$.

 With the identification $T_oM\cong \mathfrak p$, the Jacobi operator 
$R_{\mathbf u}$ can be expressed as 
\begin{gather*}
R_\mathbf u=-\ad^2 (\mathbf u)|_{\mathfrak p}. 
\end{gather*}

 Decompose $\mathbf u\perp \mathbf b_1$ as $\mathbf u=\sum_{i=2}^{n} u_i\mathbf 
b_i$ and denote by $\bar{\mathbf u}=\sum_{i>r} u_i\mathbf b_i$ the projection of 
$\mathbf u$ onto the orthogonal complement of $\mathfrak a$. If $L\colon V\to 
V$ is a linear endomorphism of the linear space $V$ and $W\leq V$ is an 
$L$-invariant linear subspace of $V$, then denote by $\tr_W(L)$ the trace of 
the restriction $L|_W$ of $L$ onto $W$.
We have
\[\tr(R_{\mathbf u}^k)=(-1)^k\tr_{\mathfrak p}(\ad^{2k}(\mathbf 
u))=(-1)^k\tr_{\mathfrak p}(\ad^{2k}(u_2\mathbf b_2(\theta)+\sum_{i=3}^{n} u_i\mathbf 
b_i)).\]
If $k\geq 1$, then $\langle R_{\mathbf u}^k\mathbf b_1,\mathbf b_1\rangle$ 
seems to be a degree $2k+2$ trigonometric polynomial of $\theta$, however, the 
following identities show that its degree is at most $2k$.
\begin{align*}
\langle R_{\mathbf u}^k\mathbf b_1,\mathbf b_1\rangle&=(-1)^k\left\langle 
\ad^{2k}\left(\sum_{i=2}^{n} u_i\mathbf b_i\right)(\mathbf b_1),\mathbf 
b_1\right\rangle\\
&=(-1)^{k+1}\left\langle\ad^{2k-2}\left(\sum_{i=2}^n u_i\mathbf 
b_i\right)\circ\ad(\bar{\mathbf u})(\mathbf b_1),\ad(\bar{\mathbf u}) (\mathbf 
b_1)\right\rangle\\
&=(-1)^k\left\langle \ad(\mathbf b_1(\theta))\circ\ad^{2k-2}\left(u_2\mathbf 
b_2(\theta)+\sum_{i=3}^n u_i\mathbf b_i\right)\circ\ad(\mathbf 
b_1(\theta))(\bar{\mathbf u}),\bar{\mathbf u}\right\rangle.
\end{align*}
The last expression is clearly a polynomial of degree $\leq 2k$ of $\cos\theta$ 
and $\sin\theta$. This 
means that the integral in \eqref{k-integral} is also a polynomial of degree 
$\leq 2k$ of $\cos\theta$ and $\sin\theta$. To apply Lemma \ref{Le:trig}, we 
compute 
the degree $2k$ homogeneous components of these polynomials of $\cos\theta$ and 
$\sin\theta$. The above equations yield
\begin{align*}
\tr(R_{\mathbf u}^k)&=(-1)^k u_2^{2k}\tr_{\mathfrak p}(\ad^{2k}(\mathbf 
b_2(\theta)))+\dots,\\
\langle R_{\mathbf u}^k\mathbf b_1,\mathbf b_1\rangle&=(-1)^{k}\left\langle 
\ad^2(\mathbf b_1(\theta))\circ\ad^{2k-2}(u_{2}\mathbf b_2(\theta))(\bar{\mathbf 
u}),\bar{\mathbf u}\right\rangle+\dots,
\end{align*}
where $\dots$ stands for a polynomial of degree less than $2k$ of $\cos\theta$ 
and $\sin\theta$.

We will need the following formula for the integral of monomials over the unit 
sphere (see \cite{Folland}).
\begin{prop}\label{lem:vol}
Let $P(x_1,\dots,x_n)=x_1^{\alpha_1}\cdots x_n^{\alpha_n}$ be a monomial in $n$ 
variables, and let $\beta_j=\frac 12(\alpha_j+1)$. Then
\[\int_{S^{n-1}_1}P(\mathbf u)\id \mathbf u=\left\{\begin{array}{ll}0&\text{if 
some $\alpha_j$ is odd,}\\ 
\frac{2\Gamma(\beta_1)\cdots\Gamma(\beta_n)}{\Gamma(\beta_1+\dots+\beta_n)}
&\text{if all $\alpha_j$ are even.}\end{array}\right.\]
\end{prop}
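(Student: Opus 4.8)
The plan is to evaluate the Gaussian integral $\int_{\R^n} P(\mathbf x)\,e^{-\|\mathbf x\|^2}\id\mathbf x$ in two different ways and compare. First I would exploit that both the monomial $P$ and the weight $e^{-\|\mathbf x\|^2}$ factor over the coordinates, so that
\[
\int_{\R^n} P(\mathbf x)\,e^{-\|\mathbf x\|^2}\id\mathbf x=\prod_{j=1}^n\int_{-\infty}^{\infty} x_j^{\alpha_j}e^{-x_j^2}\id x_j.
\]
If some $\alpha_j$ is odd, the corresponding one-dimensional factor vanishes by oddness of its integrand, so the whole product is $0$. If every $\alpha_j$ is even, the substitution $t=x_j^2$ identifies each factor with $\Gamma\big(\tfrac{\alpha_j+1}{2}\big)=\Gamma(\beta_j)$, giving the value $\prod_{j=1}^n\Gamma(\beta_j)$.

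Next I would compute the same integral in polar coordinates. Since $P$ is homogeneous of degree $d=\sum_{j}\alpha_j$, writing $\mathbf x=\rho\mathbf u$ with $\rho\geq 0$ and $\mathbf u\in S^{n-1}_1$ separates the radial and angular parts,
\[
\int_{\R^n} P(\mathbf x)\,e^{-\|\mathbf x\|^2}\id\mathbf x=\left(\int_0^{\infty}\rho^{d+n-1}e^{-\rho^2}\id\rho\right)\left(\int_{S^{n-1}_1}P(\mathbf u)\id\mathbf u\right).
\]
The substitution $t=\rho^2$ turns the radial integral into $\tfrac12\Gamma\big(\tfrac{d+n}{2}\big)$, and the one bookkeeping identity that drives the whole argument is $\tfrac{d+n}{2}=\sum_{j=1}^n\tfrac{\alpha_j+1}{2}=\beta_1+\dots+\beta_n$, which rewrites the radial factor as $\tfrac12\Gamma(\beta_1+\dots+\beta_n)$.

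Comparing the two evaluations then finishes the proof. When some $\alpha_j$ is odd the left-hand side is $0$ while the radial factor $\tfrac12\Gamma(\beta_1+\dots+\beta_n)$ is nonzero, forcing the spherical integral to vanish (this case also follows at once from the reflection symmetry $x_j\mapsto -x_j$, which fixes $S^{n-1}_1$ but negates $P$). When all $\alpha_j$ are even, dividing the resulting identity $\prod_{j}\Gamma(\beta_j)=\tfrac12\Gamma(\beta_1+\dots+\beta_n)\int_{S^{n-1}_1}P(\mathbf u)\id\mathbf u$ by the radial factor yields exactly the claimed value $2\Gamma(\beta_1)\cdots\Gamma(\beta_n)/\Gamma(\beta_1+\dots+\beta_n)$. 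I do not expect any serious obstacle here, as the computation is entirely standard; the only point deserving a moment of care is the homogeneity-plus-polar-coordinates step together with checking that the exponent $\tfrac{d+n}{2}$ collapses precisely to $\sum_j\beta_j$, so that the radial Gamma factor matches the denominator in the statement.
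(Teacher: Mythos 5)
Your proof is correct. The paper itself does not prove this proposition; it simply cites Folland's article, and your argument --- factoring the Gaussian integral $\int_{\R^n}P(\mathbf x)e^{-\|\mathbf x\|^2}\id\mathbf x$ over the coordinates on one hand and passing to polar coordinates on the other, then matching the radial factor $\tfrac12\Gamma(\beta_1+\dots+\beta_n)$ against the product $\prod_j\Gamma(\beta_j)$ --- is precisely the standard proof given in that cited reference, with all substitutions ($t=x_j^2$ giving $\Gamma(\beta_j)$, and $t=\rho^2$ giving $\tfrac12\Gamma\bigl(\tfrac{d+n}{2}\bigr)$ with $\tfrac{d+n}{2}=\sum_j\beta_j$) carried out correctly.
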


\begin{lem}\label{tr} If $\mathbf a_1,\mathbf a_2\in \mathfrak a$ are the vectors introduced above, then 
$\tr_{\mathbb C\otimes\mathfrak p}\left(\ad^{2k}(\mathbf a_1+\mathbbm i \mathbf 
a_2)\right)=0$ for all positive integers $k$.
\end{lem}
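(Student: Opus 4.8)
The plan is to argue by induction on $k$, extracting the identity from the independence of the integral \eqref{k-integral} of the unit vector $\mathbf e$. I would specialize $\mathbf e=\mathbf b_1(\theta)$; then $\mathbf e^\perp$ is spanned by the orthonormal frame $\mathbf b_2(\theta),\mathbf b_3,\dots,\mathbf b_n$, so I would write the integration variable as $\mathbf u=\sum_{i=2}^n u_i\mathbf b_i$ with $\sum u_i^2=1$. Since $\mathbf b_i=\mathbf a_i$ is fixed for $i\ge 3$, all $\theta$-dependence of the integrand then sits in $\mathbf b_1(\theta)$ and $\mathbf b_2(\theta)$, while the sphere measure is $\theta$-independent. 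As \eqref{k-integral} is independent of $\mathbf e$, it is a polynomial of degree $\le 2k$ in $\cos\theta,\sin\theta$ that is constant in $\theta$, so Lemma \ref{Le:trig} yields that its degree-$2k$ homogeneous component vanishes at $(\cos\theta,\sin\theta)=(1,\mathbbm i)$. I would read the whole lemma off from this single identity, using that at $(1,\mathbbm i)$ one has $\mathbf b_1(\theta)\mapsto v$ and $\mathbf b_2(\theta)\mapsto -\mathbbm i\,v$, where $v=\mathbf a_1+\mathbbm i\mathbf a_2$.

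The core is the evaluation of that degree-$2k$ component, for which I would split \eqref{k-integral} into the leading trace term, the term $b_k\langle R_{\mathbf u}^k\mathbf e,\mathbf e\rangle$, and the cross terms, keeping only the top homogeneous parts recorded before the lemma. From $\tr(R_{\mathbf u}^k)=(-1)^k u_2^{2k}\tr_{\mathfrak p}(\ad^{2k}(\mathbf b_2(\theta)))+\dots$ and $\tr_{\mathfrak p}(\ad^{2k}(\mathbf b_2))|_{(1,\mathbbm i)}=(-\mathbbm i)^{2k}\tr_{\mathbb C\otimes\mathfrak p}(\ad^{2k}(v))$, the leading term contributes $\frac{b_k}{2k}\bigl(\int u_2^{2k}\id\mathbf u\bigr)\tr_{\mathbb C\otimes\mathfrak p}(\ad^{2k}(v))$ after integration. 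For the second term, its top part $(-1)^k\langle\ad^2(\mathbf b_1)\circ\ad^{2k-2}(u_2\mathbf b_2)\bar{\mathbf u},\bar{\mathbf u}\rangle$ collapses at $(1,\mathbbm i)$ to a multiple of $u_2^{2k-2}\langle\ad^{2k}(v)\bar{\mathbf u},\bar{\mathbf u}\rangle$, because both $\mathbf b_1(\theta)$ and $\mathbf b_2(\theta)$ become proportional to $v$; integrating via Proposition \ref{lem:vol} produces a multiple of $\sum_{i>r}\langle\ad^{2k}(v)\mathbf b_i,\mathbf b_i\rangle$. Here I would invoke the identity $\sum_{i>r}\langle\ad^{2k}(v)\mathbf b_i,\mathbf b_i\rangle=\tr_{\mathbb C\otimes\mathfrak p}(\ad^{2k}(v))$, valid because $\ad(v)$ annihilates $\mathfrak a$ (as $v\in\mathbb C\otimes\mathfrak a$ and $\mathfrak a$ is abelian), so the basis vectors $\mathbf b_1,\dots,\mathbf b_r\in\mathfrak a$ contribute nothing to the trace.

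The cross terms vanish at $(1,\mathbbm i)$ by the inductive hypothesis. Each cross term is a product in which at least one factor is $\tr(R_{\mathbf u}^{l_i})$ with $1\le l_i<k$ (since $m\ge 1$ in the displayed sum); the degree-$2k$ part of the product is the product of the top homogeneous parts of its factors, and the top part of $\tr(R_{\mathbf u}^{l_i})$ evaluates at $(1,\mathbbm i)$ to a multiple of $\tr_{\mathbb C\otimes\mathfrak p}(\ad^{2l_i}(v))$, which is $0$ by induction. Hence every cross term drops out. For $k=1$ the cross-term sum is empty, which settles the base case.

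Collecting the surviving contributions, the degree-$2k$ component at $(1,\mathbbm i)$ equals $\bigl(\frac{b_k}{2k}\int u_2^{2k}\id\mathbf u-b_k\int u_2^{2k-2}u_i^2\id\mathbf u\bigr)\tr_{\mathbb C\otimes\mathfrak p}(\ad^{2k}(v))$, and Lemma \ref{Le:trig} forces it to be $0$. To finish, I would evaluate the two monomial integrals with Proposition \ref{lem:vol}: their ratio is $\frac{2k-1}{2k}\neq 1$, so the bracketed constant is nonzero (recall $b_k\neq 0$), whence $\tr_{\mathbb C\otimes\mathfrak p}(\ad^{2k}(v))=0$ and the induction closes. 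The main obstacle I anticipate is the bookkeeping in the second step: one must check that, within the degree-$2k$ homogeneous component, nothing beyond the explicitly displayed leading parts survives the evaluation at $(1,\mathbbm i)$, and that the two surviving contributions do not cancel — the latter resting precisely on the unequal values of the monomial integrals supplied by Proposition \ref{lem:vol}.
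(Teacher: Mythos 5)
Your proposal is correct and follows essentially the same route as the paper's own proof: induction on $k$, specialization $\mathbf e=\mathbf b_1(\theta)$ in \eqref{k-integral}, Lemma \ref{Le:trig} applied to the degree-$2k$ component with $\mathbf b_1\mapsto v$, $\mathbf b_2\mapsto-\mathbbm i v$, vanishing of the cross terms by the induction hypothesis, and the nondegeneracy $C_1/C_2=2k-1\neq 2k$ from Proposition \ref{lem:vol}. The only (harmless) difference is that you make explicit the identity $\sum_{i>r}\langle\ad^{2k}(v)\mathbf b_i,\mathbf b_i\rangle=\tr_{\mathbb C\otimes\mathfrak p}(\ad^{2k}(v))$, which the paper uses implicitly when it replaces the integrated quadratic form in $\bar{\mathbf u}$ by a full trace over $\mathfrak p$.
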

\begin{proof}
We prove the lemma by induction on $k$.
To show the base case $k=1$, consider the integral in (\ref{k-integral}) for 
$k=1$ and $\mathbf e=\mathbf b_1$. 
We obtain that the value of the integral modulo a trigonometric polynomial of $\theta$ of 
degree less than $2$ is equal to
\begin{align*}
\int_{S^{n-2}_1(\mathbf b_1)}&-\frac{b_1}{2}u_2^{2}\tr_{\mathfrak 
p}(\ad^{2}(\mathbf b_2(\theta)))-b_1\left\langle \ad^2(\mathbf 
b_1(\theta))(\bar{\mathbf u}),\bar{\mathbf u}\right\rangle\id\mathbf u\\
&=C\left(-\frac{b_1}2\tr_{\mathfrak p}(\ad^{2}(-\sin\theta\,\mathbf 
a_1+\cos\theta\,\mathbf a_2))-b_1\tr_{\mathfrak p}(\ad^{2}(\cos\theta\,\mathbf 
a_1+\sin\theta\,\mathbf a_2))\right),
\end{align*}
where $C>0$ is the integral of the monomial $x_1^2$ over the unit sphere 
$S_1^{n-2}$. 
The value of the integral in (\ref{k-integral}) for $\mathbf e=\mathbf b_1$ does 
not depend on the choice of $\theta$, so we can apply Lemma \ref{Le:trig} to it. 
This yields 
\begin{equation*}
0=-\tr_{\mathbb C\otimes\mathfrak p}(\ad^{2}(-\mathbbm i\mathbf a_1+\mathbf 
a_2))-2\tr_{\mathbb C\otimes\mathfrak p}(\ad^{2}(\mathbf a_1+\mathbbm i\mathbf 
a_2))=-\tr_{\mathbb C\otimes\mathfrak p}(\ad^{2}(\mathbf a_1+\mathbbm i\mathbf 
a_2)),
\end{equation*}
which settles the base case.

For the induction step, assume that $k\geq 2$ and the statement is true for positive  
integers less than $k$. Evaluate the integral in (\ref{k-integral})  for $k$ and 
$\mathbf e=\mathbf b_1$. Modulo trigonometric polynomials of $\theta$ of degree less than 
$2k$, the integral equals
\begin{align*}
(-1)^k&\int_{S^{n-2}_1(\mathbf b_1)}\frac{b_k}{2k}u_2^{2k}\tr_{\mathfrak 
p}(\ad^{2k}(\mathbf b_2(\theta)))+\sum_{\substack{1\leq l_1,\dots,l_m<k\\ 
\sum_{i=1}^m 
l_i=k}}\frac{1}{m!}\prod_{i=1}^m\frac{b_{l_i}}{2l_i}u_2^{2l_i}\tr_{\mathfrak 
p}(\ad^{2l_i}(\mathbf b_2(\theta)))+{}\\
 &\qquad +\sum_{\substack{0<l<k,\\1\leq l_1,\dots,l_m<k\\l+\sum_{i=1}^m 
l_i=k}}\frac{b_l}{m!}\left\langle \ad^2(\mathbf 
b_1(\theta))\circ\ad^{2l-2}(u_{2}\mathbf b_2(\theta))\left(\bar{\mathbf 
u}\right),\bar{\mathbf 
u}\right\rangle\prod_{i=1}^m\frac{b_{l_i}}{2l_i}u_2^{2l_i}\tr_{\mathfrak 
p}(\ad^{2l_i}(\mathbf b_2(\theta)))+{}
\\
&\qquad +b_k\left\langle \ad^2(\mathbf b_1(\theta))\circ\ad^{2k-2}(u_{2}\mathbf 
b_2(\theta))(\bar{\mathbf u}),\bar{\mathbf u}\right\rangle\id\mathbf u\\
&=(-1)^k\Bigg(C_1\frac{b_k}{2k}\tr_{\mathfrak p}(\ad^{2k}(\mathbf 
b_2(\theta)))+C_1\sum_{\substack{1\leq l_1,\dots,l_m<k\\ \sum_{i=1}^m 
l_i=k}}\frac{1}{m!}\prod_{i=1}^m\frac{b_{l_i}}{2l_i}\tr_{\mathfrak 
p}(\ad^{2l_i}(\mathbf b_2(\theta)))+{}\\
 &\qquad +C_2\sum_{\substack{0< l<k,\\1\leq l_1,\dots,l_m<k\\l+\sum_{i=1}^m 
l_i=k}}\frac{b_l}{m!}\tr_{\mathfrak p}( \ad^2(\mathbf 
b_1(\theta))\circ\ad^{2l-2}(\mathbf 
b_2(\theta)))\prod_{i=1}^m\frac{b_{l_i}}{2l_i}\tr_{\mathfrak 
p}(\ad^{2l_i}(\mathbf b_2(\theta)))+{}
\\
&\qquad +C_2b_k \tr_{\mathfrak p}(\ad^2(\mathbf 
b_1(\theta))\circ\ad^{2k-2}(\mathbf b_2(\theta)))\Bigg),
\end{align*}
where $C_1$ and $C_2$ are the integrals of the monomials $x_1^{2k}$ and 
$x_1^{2k-2}x_2^2$ over the unit sphere  $S_1^{n-2}$ respectively. Just as in the 
base case, the value of the integral in (\ref{k-integral}) for $\mathbf 
e=\mathbf b_1(\theta)$ does not depend on $\theta$, so we can apply Lemma 
\ref{Le:trig} to it. This gives that
\begin{align*}
0={}&C_1\frac{b_k}{2k}\tr_{\mathbb C\otimes\mathfrak p}(\ad^{2k}(-\mathbbm 
i\mathbf a_1+\mathbf a_2))+C_1\sum_{\substack{1\leq l_1,\dots,l_m<k\\ 
\sum_{i=1}^m l_i=k}}\frac{1}{m!}\prod_{i=1}^m\frac{b_{l_i}}{2l_i}\tr_{\mathbb C\otimes\mathfrak 
p}(\ad^{2l_i}(-\mathbbm i\mathbf a_1+\mathbf a_2))+{}\\
 & +C_2\sum_{\substack{0< l<k,\\1\leq l_1,\dots,l_m<k\\l+\sum_{i=1}^m 
l_i=k}}\frac{b_l}{m!}\tr_{\mathbb C\otimes\mathfrak p}( \ad^2(\mathbf 
a_1+\mathbbm i \mathbf a_2)\circ\ad^{2l-2}(-\mathbbm i\mathbf a_1+\mathbf 
a_2))\prod_{i=1}^m\frac{b_{l_i}}{2l_i}\tr_{\mathbb C\otimes\mathfrak 
p}(\ad^{2l_i}(-\mathbbm i\mathbf a_1+\mathbf a_2))+{}
\\
&+C_2b_k \tr_{\mathbb C\otimes\mathfrak p}(\ad^2(\mathbf a_1+\mathbbm i \mathbf 
a_2)\circ\ad^{2k-2}(-\mathbbm i\mathbf a_1+\mathbf a_2)).
\end{align*}
By the induction hypothesis, 
\begin{equation*}
\tr_{\mathbb C\otimes\mathfrak p}(\ad^{2l_i}(-\mathbbm i\mathbf a_1+\mathbf 
a_2))=(-1)^{l_i}\tr_{\mathbb C\otimes\mathfrak p}(\ad^{2l_i}(\mathbf 
a_1+\mathbbm i\mathbf a_2))=0
\end{equation*}
if $l_i<k$. For this reason,
\begin{align*}
0{}&={}C_1\frac{b_k}{2k}\tr_{\mathbb C\otimes\mathfrak p}(\ad^{2k}(-\mathbbm 
i\mathbf a_1+\mathbf a_2))+C_2b_k \tr_{\mathbb C\otimes\mathfrak 
p}(\ad^2(\mathbf a_1+\mathbbm i \mathbf a_2)\circ\ad^{2k-2}(-\mathbbm i\mathbf 
a_1+\mathbf a_2))\\
&={}(-1)^k\left(\frac{C_1}{2k}-C_2\right)b_k\tr_{\mathbb C\otimes\mathfrak 
p}(\ad^{2k}(\mathbf a_1+\mathbbm i\mathbf a_2)).
\end{align*}
The last equation completes the proof of the lemma as $b_k$ is negative, and by 
Proposition \ref{lem:vol}, we have
\[
\frac{C_1}{C_2}=\frac{\Gamma(k+\frac12)\Gamma(\frac12)}{
\Gamma(k-\frac12)\Gamma(\frac32)}=2k-1\neq 2k.\qedhere\]
\end{proof}

\begin{lem}\label{lem:Iwasawa} Let $(\mathfrak g,s)$ be an orthogonal symmetric Lie algebra of 
compact or non-compact type, $\mathfrak g=\mathfrak k\oplus \mathfrak p$ be the 
Cartan decomposition of $\mathfrak g$. Then if $\mathbf a_1,\mathbf a_2\in 
\mathfrak p$ are two commuting elements such that $\tr_{\mathbb C\otimes 
\mathfrak p}(\ad^{2k}( \mathbf a_1+\mathbbm i\mathbf a_2))=0$ for all positive 
integers $k$, then $\mathbf a_1=\mathbf a_2=\mathbf 0$.
\end{lem}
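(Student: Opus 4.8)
The plan is to reduce the hypothesis about the trace over $\mathbb{C}\otimes\mathfrak p$ to a statement about the full adjoint action on $\mathbb{C}\otimes\mathfrak g$, and then to exploit that elements of $\mathfrak p$ act semisimply. Write $\mathbf a=\mathbf a_1+\mathbbm i\mathbf a_2\in\mathbb{C}\otimes\mathfrak p$. The first observation is that, because $\mathbf a_1,\mathbf a_2\in\mathfrak p$ and $[\mathfrak p,\mathfrak p]\subseteq\mathfrak k$, $[\mathfrak p,\mathfrak k]\subseteq\mathfrak p$, the operator $\ad(\mathbf a)$ interchanges $\mathbb{C}\otimes\mathfrak p$ and $\mathbb{C}\otimes\mathfrak k$. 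Denoting its restrictions by $A\colon\mathbb{C}\otimes\mathfrak p\to\mathbb{C}\otimes\mathfrak k$ and $B\colon\mathbb{C}\otimes\mathfrak k\to\mathbb{C}\otimes\mathfrak p$, we have $\ad^{2k}(\mathbf a)|_{\mathbb{C}\otimes\mathfrak p}=(BA)^k$ and $\ad^{2k}(\mathbf a)|_{\mathbb{C}\otimes\mathfrak k}=(AB)^k$. The cyclic invariance of the trace gives $\tr((BA)^k)=\tr((AB)^k)$ for every $k\geq1$, so the two restricted traces agree and $\tr_{\mathbb{C}\otimes\mathfrak p}(\ad^{2k}(\mathbf a))=\tfrac12\tr_{\mathbb{C}\otimes\mathfrak g}(\ad^{2k}(\mathbf a))$. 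Hence the hypothesis is equivalent to $\tr_{\mathbb{C}\otimes\mathfrak g}(\ad^{2k}(\mathbf a))=0$ for all $k\geq1$.

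Next I would use that the symmetric space is of compact or non-compact type to control the eigenvalues of $\ad(\mathbf a)$. For each $\mathbf a_i\in\mathfrak p$ the operator $\ad(\mathbf a_i)$ is semisimple: it is self-adjoint with respect to the positive definite form $B_s(\cdot,\cdot)=-B(\cdot,s\,\cdot)$ in the non-compact case, and skew-adjoint with respect to $-B$ in the compact case, where $B$ is the Killing form. Since $\mathbf a_1$ and $\mathbf a_2$ commute, $\ad(\mathbf a_1)$ and $\ad(\mathbf a_2)$ are commuting semisimple operators, hence simultaneously diagonalizable over $\mathbb{C}$; consequently $\ad(\mathbf a)=\ad(\mathbf a_1)+\mathbbm i\,\ad(\mathbf a_2)$ is semisimple on $\mathbb{C}\otimes\mathfrak g$. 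Let $\mu_1,\dots,\mu_N$ be its eigenvalues listed with multiplicity. The reduced hypothesis reads $\sum_j\mu_j^{2k}=0$ for all $k\geq1$; setting $\nu_j=\mu_j^2$ and grouping equal values, a Vandermonde (equivalently, Newton-identity) argument forces every $\nu_j$, and hence every $\mu_j$, to vanish. A semisimple operator all of whose eigenvalues are zero is the zero operator, so $\ad(\mathbf a)=0$.

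Finally, $\ad(\mathbf a)=0$ means that $\mathbf a$ lies in the center of $\mathbb{C}\otimes\mathfrak g$; since $\mathfrak g$ is semisimple this center is trivial, so $\mathbf a=0$, and taking real and imaginary parts yields $\mathbf a_1=\mathbf a_2=\mathbf 0$. The one step that needs genuine structural input is the semisimplicity of $\ad$ on $\mathfrak p$ together with the resulting simultaneous diagonalizability, which is exactly where the assumption that $(\mathfrak g,s)$ is of compact or non-compact type, combined with the commutativity of $\mathbf a_1$ and $\mathbf a_2$, enters; the remaining steps are formal. An alternative route that avoids the cyclic-trace reduction is to place $\mathbf a_1,\mathbf a_2$ in a common maximal abelian subspace $\mathfrak a\subseteq\mathfrak p$ and compute directly, via the restricted root space decomposition, that $\tr_{\mathbb{C}\otimes\mathfrak p}(\ad^{2k}(\mathbf a))=\sum_{\alpha}m_\alpha\,\alpha(\mathbf a)^{2k}$; the same power-sum argument then yields $\alpha(\mathbf a)=0$ for every restricted root $\alpha$, and hence $\mathbf a=0$ because the restricted roots span $\mathfrak a^{*}$.
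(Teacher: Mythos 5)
Your proof is correct, and it follows a genuinely different route from the paper's. The paper first reduces to non-compact type by duality ($\mathfrak g^*=\mathfrak k\oplus\mathbbm i\mathfrak p$, with the trace condition transforming only by a sign $(-1)^k$), places $\mathbf a_1,\mathbf a_2$ in a maximal abelian subspace $\mathfrak a\subset\mathfrak p$, extends it to a Cartan subalgebra, and uses the Iwasawa/root-space decomposition to compute
\[
\tr_{\mathbb C\otimes\mathfrak p}\left(\ad^{2k}(\mathbf a_1+\mathbbm i\mathbf a_2)\right)=\frac12\sum_{\lambda\in\Delta}\lambda^{2k}(\mathbf a_1+\mathbbm i\mathbf a_2);
\]
the power-sum argument then forces $\lambda(\mathbf a_1+\mathbbm i\mathbf a_2)=0$ for every root, and since the roots are real on $\mathbbm i\mathfrak b\oplus\mathfrak a$ and $\Delta$ spans the dual of the Cartan subalgebra, $\mathbf a_1=\mathbf a_2=\mathbf 0$. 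You bypass both the duality reduction and the root-space machinery: the cyclic-trace identity $\tr((BA)^k)=\tr((AB)^k)$ converts the hypothesis into $\tr_{\mathbb C\otimes\mathfrak g}(\ad^{2k}(\mathbf a))=0$; self-adjointness (non-compact type) or skew-adjointness (compact type) of $\ad(\mathbf a_i)$ with respect to a definite form, together with commutativity, makes $\ad(\mathbf a)$ semisimple; the same power-sum/Vandermonde step kills all its eigenvalues, so $\ad(\mathbf a)=0$, and the triviality of the center of a semisimple Lie algebra gives $\mathbf a=\mathbf 0$. Your route is shorter, more elementary, and treats the two types uniformly; the paper's route is heavier but produces the explicit expression of the trace as a power sum of root values, which is the form in which the root system (and ultimately the rank hypothesis) enters the surrounding argument. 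The two proofs share the same power-sum core; note only that your alternative sketch via restricted roots is essentially the paper's argument and, as stated, presupposes non-compact type (where the restricted root-space decomposition is available), so it would still require the duality step that your main argument renders unnecessary.
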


\begin{proof} By duality, orthogonal symmetric Lie algebras of compact and non-compact type occur in dual pairs. If $(\mathfrak g,s)$ is an orthogonal symmetric Lie algebra, then its dual is $(\mathfrak g^*,s^*)$, where $\mathfrak g^*=\mathfrak k\oplus \mathbbm i\mathfrak p<\mathbb C\otimes \mathfrak g$ and $s^*|_{\mathfrak k}=\mathrm{id}|_{\mathfrak k}$, $s^*|_{\mathbbm i\mathfrak  p}=-\text{id}|_{\mathbbm i\mathfrak p}$. If $\mathbf a_1,\mathbf a_2$ are two commuting elements in $\mathfrak p$, then  $\mathbbm i\mathbf a_1,\mathbbm i\mathbf a_2$ are two commuting elements in $\mathbbm i\mathfrak p$, and  $\tr_{\mathbb C\otimes \mathfrak p}(\ad^{2k}( \mathbf a_1+\mathbbm i\mathbf a_2))=(-1)^k\tr_{\mathbb C\otimes \mathbbm i\mathfrak p}(\ad^{2k}( (\mathbbm i\mathbf a_1)+\mathbbm i(\mathbbm i\mathbf a_2)))$. Thus, the lemma is true for $(\mathfrak g,s)$ if and only if it is true for its dual. Consequently, we may assume that $(\mathfrak g,s)$ is of non-compact type.

Let $\mathfrak a<\mathfrak p$ be a maximal abelian Lie subalgebra in $\mathfrak p$ containing the vectors $\mathbf a_1$ and $\mathbf a_2$. We construct the Iwasawa decomposition of $\mathfrak g$ as it is described in  \cite[ch. VI, \S 3 ]{Helgason2}. First we extend $\mathfrak a$ to a maximal abelian subalgebra $\mathfrak h$ of $\mathfrak g$. The Lie algebra $\mathfrak h$ decomposes as $\mathfrak h=\mathfrak b\oplus\mathfrak a$, where $\mathfrak b=\mathfrak h\cap \mathfrak k$. The complexification $\mathfrak h_{\mathbb C}=\mathbb C\otimes \mathfrak h$ of $\mathfrak h$ is a Cartan subalgebra of the complex semisimple Lie algebra $\mathfrak g_{\mathbb C}=\mathbb C\otimes \mathfrak g$. Denote by $\Delta$ the root system of $\mathfrak g_{\mathbb C}$ with respect to $\mathfrak h_{\mathbb C}$, and let 
\[
\mathfrak g_{\mathbb C}=\mathfrak h_{\mathbb C}\oplus\bigoplus_{\lambda\in \Delta}\mathfrak g_{\lambda}
\]
be the root space decomposition of $\mathfrak g_{\mathbb C}$.

The roots are real valued on $\mathfrak h_{\mathbb R}=\mathbbm i\mathfrak 
b\oplus \mathfrak a$, so $\Delta$ can be embedded into the dual space of the 
real linear space $\mathfrak h_{\mathbb R}$. Select compatible orderings of the 
dual spaces of $\mathfrak a$ and $\mathfrak h_{\mathbb R}$, this way we get an 
ordering of $\Delta$. Let $\Delta^+$ denote the set of positive roots. Denote by 
$\Delta_{\mathfrak p}$ the set of roots that do not vanish identically on 
$\mathfrak a$, and put $\Delta_{\mathfrak p}^+=\Delta_{\mathfrak p}\cap 
\Delta^+$. Then the complex nilpotent Lie algebra $\mathfrak n_{\mathbb 
C}=\bigoplus_{\lambda\in \Delta_{\mathfrak p}^+}\mathfrak g_{\lambda}$ is the 
complexification of the real nilpotent Lie algebra $\mathfrak n=\mathfrak 
n_{\mathbb C}\cap \mathfrak g$, and $\mathfrak g=\mathfrak k\oplus\mathfrak a 
\oplus \mathfrak n$. This is the Iwasawa decomposition.

If $\mathbf a\in \mathbb C\otimes\mathfrak a$, then $\mathbb C\otimes\mathfrak 
k$ is an $\ad^2(\mathbf a)$-invariant subspace, and both $\mathbb 
C\otimes\mathfrak p$ and $\mathbb C\otimes(\mathfrak a\oplus\mathfrak n)$ are 
$\ad^2(\mathbf a)$-invariant complementary subspaces to it. Thus, we have 
\[\tr_{\mathbb C\otimes \mathfrak p}(\ad^{2k}( \mathbf a))=\tr_{\mathbb C\otimes (\mathfrak a\oplus\mathfrak n)}(\ad^{2k}( \mathbf a)).\] 
The subspaces $\mathbb C\otimes\mathfrak a$ and $\mathfrak n_{\mathbb C}$ are even $\ad(\mathbf a)$-invariant.  The eigenvalues of the restriction of $\ad(\mathbf a)$ onto $\mathfrak n_{\mathbb C}$ are the numbers $\lambda(\mathbf a)$ for $\lambda\in \Delta_{\mathfrak p}^+$, the restriction of  $\ad(\mathbf a)$ onto $\mathbb C\otimes \mathfrak a$ is zero. From this, we obtain
\[\tr_{\mathbb C\otimes (\mathfrak a\oplus\mathfrak n)}(\ad^{2k}( \mathbf a))=\sum_{\lambda\in \Delta_{\mathfrak p}^+}\lambda^{2k}(\mathbf a)=\frac12\sum_{\lambda\in \Delta_{\mathfrak p}}\lambda^{2k}(\mathbf a)=\frac12\sum_{\lambda\in \Delta}\lambda^{2k}(\mathbf a).\] 
Applying this formula for $\mathbf a=\mathbf a_1+\mathbbm i\mathbf a_2$, we 
obtain that $\sum_{\lambda\in \Delta}\lambda^{2k}(\mathbf a_1+\mathbbm i\mathbf 
a_2)=0$ for all positive integers $k$. This implies that $\lambda(\mathbf 
a_1+\mathbbm i\mathbf a_2)=0$ for all roots $\lambda\in \Delta$. As 
$\lambda(\mathbf a_1)$ and $\lambda(\mathbf a_2)$ are real numbers, we conclude 
that $\lambda(\mathbf a_1)=\lambda(\mathbf a_2)=0$ for all roots $\lambda$. 
Since $\Delta$ spans the dual space of the Cartan subalgebra $\mathfrak 
h_{\mathbb C}$, this implies $\mathbf a_1=\mathbf a_2=\mathbf 0$.
\end{proof}
By Lemma \ref{tr}, we can apply Lemma \ref{lem:Iwasawa} to the linearly 
independent basis vectors 
$\mathbf a_1$ and $\mathbf a_2$ of $\mathfrak a$. However, Lemma 
\ref{lem:Iwasawa} 
implies $\mathbf a_1=\mathbf a_2=\mathbf 0$, a contradiction.

 Since every locally symmetric space is locally isometric to a symmetric space, the following corollary is straightforward.
\begin{cor} A locally symmetric space has the tube property if and only if it is flat or has rank one.
\end{cor}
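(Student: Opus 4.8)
The plan is to reduce the corollary to Theorem \ref{thm:sym} and Theorem \ref{thm:harmonic} by exploiting that all three notions entering the statement---the tube property, flatness, and having rank one---are \emph{local}. The volume $\vol(\mathcal T(\gamma,r))$ for a short curve $\gamma$ and a small radius $r$ is determined by the geometry on a neighborhood of the trace of $\gamma$; likewise, flatness and the rank are invariants of the local curvature. Since a locally symmetric space is, by the standard structure theory (via the Cartan--Ambrose--Hicks theorem), locally isometric about each point $p$ to an open subset of a simply connected globally symmetric space $\tilde M$ carrying the same curvature tensor at the corresponding point, one should be able to transfer both implications between $M$ and its local models $\tilde M$.

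For the direction ``flat or rank one $\Rightarrow$ tube property'', I would argue that each point of $M$ then has a neighborhood isometric to an open subset of a Euclidean space or a rank one symmetric space. Each such model is harmonic, and because harmonicity is itself a local condition (constant mean curvature of small geodesic spheres), $M$ is a connected harmonic manifold. Theorem \ref{thm:harmonic} then yields the tube property directly.

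For the converse, suppose $M$ is locally symmetric and has the tube property. Here the key observation is that the entire derivation behind Theorem \ref{thm:sym} is local: the volume formula \eqref{integral}, the reduction to the $2$-stein condition supplied by Theorem \ref{thm:2-stein}, and the curvature-algebraic identities established in Lemmas \ref{tr} and \ref{lem:Iwasawa} all involve only the Jacobi operators at a single base point together with integrals over arbitrarily small spheres in that tangent space. Consequently, the tube property for short curves contained in a neighborhood of $p$ forces exactly the same pointwise identities on the local model $\tilde M$, so the conclusion of Theorem \ref{thm:sym} applies to $\tilde M$, which is therefore Euclidean or of rank one. As the rank is the dimension of a maximal abelian subspace of $\mathfrak p$ and is encoded in the local curvature, $M$ inherits the same alternative.

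The hard part, really the only delicate point, will be making this localization rigorous: one must check that the tube property of $M$ genuinely produces the pointwise conditions used in the proof of Theorem \ref{thm:sym} at \emph{every} point, and that ``flat or rank one'' is inherited from the local symmetric model rather than being a merely global property. Once one pins down that only tubes of small radius about short curves enter the argument, and that the integrands in \eqref{integral} are assembled solely from the curvature operator at the base point, the transfer is immediate; the equivalence with local harmonicity then follows from the remark after Theorem \ref{thm:sym} together with the characterization of harmonic symmetric spaces.
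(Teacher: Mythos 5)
Your proposal is correct and follows essentially the same route as the paper, whose entire proof is the observation that every locally symmetric space is locally isometric to a symmetric space, so that the corollary reduces to Theorem \ref{thm:sym} and Theorem \ref{thm:harmonic} via the locality of the tube property, of harmonicity, and of the curvature conditions (the pointwise identities behind \eqref{k-integral} and the $2$-stein property). The details you flag as delicate are exactly the ones the paper treats as straightforward, and your sketch of them is sound.
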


\section{Acknowledgements}
Both authors were supported by the Hungarian National Science and Research 
Foundation OTKA K 112703. During the research the first author enjoyed the 
hospitality of the MTA Alfr\'ed R\'enyi Institute of Mathematics as a guest 
researcher. The authors are indebted to Professor Gudlaugur Thorbergsson for 
posing the question that served as the starting point of this research and for 
helpful and fruitful discussions. They also thank the anonymous referees for 
their comments and suggestions, which improved the quality of the publication.

		\bibliographystyle{ieeetr}
\bibliography{Tube}

\begin{thebibliography}{10}

\bibitem{Hotelling}
H.~Hotelling, ``Tubes and {S}pheres in n-{S}paces, and a {C}lass of
  {S}tatistical {P}roblems,'' {\em Amer. J. Math.}, vol.~61, no.~2,
  pp.~440--460, 1939.

\bibitem{Weyl}
H.~Weyl, ``On the {V}olume of {T}ubes,'' {\em Amer. J. Math.}, vol.~61, no.~2,
  pp.~461--472, 1939.

\bibitem{Gray-Vanhecke}
A.~Gray and L.~Vanhecke, ``The volumes of tubes about curves in a {R}iemannian
  manifold,'' {\em Proc. London Math. Soc. (3)}, vol.~44, no.~2, pp.~215--243,
  1982.

\bibitem{Csikos_Horvath}
B.~Csik{\'o}s and M.~Horv{\'a}th, ``A characterization of harmonic spaces,''
  {\em J. Differential Geom.}, vol.~90, no.~3, pp.~383--389, 2012.

\bibitem{Copson_Ruse}
E.~T. Copson and H.~S. Ruse, ``Harmonic {R}iemannian spaces,'' {\em Proc. Roy.
  Soc. Edinburgh}, vol.~60, pp.~117--133, 1940.

\bibitem{Vanhecke-Willmore}
L.~Vanhecke and T.~J. Willmore, ``Interaction of tubes and spheres,'' {\em
  Math. Ann.}, vol.~263, no.~1, pp.~31--42, 1983.

\bibitem{Heber}
J.~Heber, ``On harmonic and asymptotically harmonic homogeneous spaces,'' {\em
  Geom. Funct. Anal.}, vol.~16, no.~4, pp.~869--890, 2006.

\bibitem{Kowalski}
O.~Kowalski, ``Spaces with volume-preserving symmetries and related classes of
  {R}iemannian manifolds,'' {\em Rend. Sem. Mat. Univ. Politec. Torino},
  no.~Special Issue, pp.~131--158 (1984), 1983.
\newblock Conference on differential geometry on homogeneous spaces (Turin,
  1983).

\bibitem{Willmore}
T.~J. Willmore, {\em Riemannian geometry}.
\newblock Oxford Science Publications, The Clarendon Press, Oxford University
  Press, New York, 1993.

\bibitem{Vanhecke}
L.~Vanhecke, ``A note on harmonic spaces,'' {\em Bull. London Math. Soc.},
  vol.~13, no.~6, pp.~545--546, 1981.

\bibitem{Gual_Naveira_1}
A.~M. Naveira and X.~Gual, ``The volume of geodesic balls and tubes about
  totally geodesic submanifolds in compact symmetric spaces,'' {\em
  Differential Geom. Appl.}, vol.~7, no.~2, pp.~101--113, 1997.

\bibitem{Gual_Naveira_2}
X.~Gual-Arnau and A.~M. Naveira, ``Volume of tubes in noncompact symmetric
  spaces,'' {\em Publ. Math. Debrecen}, vol.~54, no.~3-4, pp.~313--320, 1999.

\bibitem{Carpenter}
P.~Carpenter, A.~Gray, and T.~J. Willmore, ``The curvature of {E}instein
  symmetric spaces,'' {\em Quart. J. Math. Oxford Ser. (2)}, vol.~33, no.~129,
  pp.~45--64, 1982.

\bibitem{Kazdan_DeTurck}
D.~M. DeTurck and J.~L. Kazdan, ``Some regularity theorems in {R}iemannian
  geometry,'' {\em Ann. Sci. \'Ecole Norm. Sup. (4)}, vol.~14, no.~3,
  pp.~249--260, 1981.

\bibitem{Szabo2}
Z.~I. Szab{\'o}, ``Spectral theory for operator families on {R}iemannian
  manifolds,'' in {\em Differential geometry: {R}iemannian geometry ({L}os
  {A}ngeles, {CA}, 1990)}, vol.~54 of {\em Proc. Sympos. Pure Math.},
  pp.~615--665, Amer. Math. Soc., Providence, RI, 1993.

\bibitem{Radon}
S.~Helgason, {\em The {R}adon transform}, vol.~5 of {\em Progress in
  Mathematics}.
\newblock Birkh\"auser Boston, Inc., Boston, MA, second~ed., 1999.

\bibitem{DAtri}
P.~G{\"u}nther and F.~Pr{\"u}fer, ``Mean value operators, differential
  operators and {D}'{A}tri spaces,'' {\em Ann. Global Anal. Geom.}, vol.~17,
  no.~2, pp.~113--127, 1999.

\bibitem{Szabo}
Z.~I. Szab{\'o}, ``The {L}ichnerowicz conjecture on harmonic manifolds,'' {\em
  J. Differential Geom.}, vol.~31, no.~1, pp.~1--28, 1990.

\bibitem{DamekRicci}
J.~Berndt, F.~Tricerri, and L.~Vanhecke, {\em Generalized {H}eisenberg groups
  and {D}amek-{R}icci harmonic spaces}, vol.~1598 of {\em Lecture Notes in
  Mathematics}.
\newblock Springer-Verlag, Berlin, 1995.

\bibitem{DamekRicci2}
F.~Rouvi{\`e}re, ``Espaces de {D}amek-{R}icci, g\'eom\'etrie et analyse,'' in
  {\em Analyse sur les groupes de {L}ie et th\'eorie des repr\'esentations
  ({K}\'enitra, 1999)}, vol.~7 of {\em S\'emin. Congr.}, pp.~45--100, Soc.
  Math. France, Paris, 2003.

\bibitem{Eschenburg}
J.-H. Eschenburg, ``A note on symmetric and harmonic spaces,'' {\em J. London
  Math. Soc. (2)}, vol.~21, no.~3, pp.~541--543, 1980.

\bibitem{Folland}
G.~B. Folland, ``How to integrate a polynomial over a sphere,'' {\em Amer.
  Math. Monthly}, vol.~108, no.~5, pp.~446--448, 2001.

\bibitem{Helgason2}
S.~Helgason, {\em Differential geometry, {L}ie groups, and symmetric spaces},
  vol.~34 of {\em Graduate Studies in Mathematics}.
\newblock American Mathematical Society, Providence, RI, 2001.
\newblock Corrected reprint of the 1978 original.

\end{thebibliography}
\end{document}